\documentclass[11pt]{amsart}
\baselineskip=7.0mm
\usepackage{amsmath}
\setlength{\baselineskip}{1.09\baselineskip}

\usepackage{tikz}

\theoremstyle{plain}
\newtheorem{theorem}{Theorem}[section]
\newtheorem{lemma}[theorem]{Lemma}

\newtheorem{corollary}[theorem]{Corollary}

\theoremstyle{definition}
\newtheorem{remark}[theorem]{Remark}

\numberwithin{equation}{section}
%\numberwithin{equation}{subsection}

\setlength{\textwidth}{5.8in} \setlength{\textheight}{8.0in}
\hoffset=-0.45truein \voffset=0.1truein

\def\be{\begin{equation}}
\def\ee{\end{equation}}

\begin{document}

\title[Asymptotics for  Null-timelike Boundary Problems]
{Asymptotics for  Null-timelike Boundary Problems for General Linear Wave Equations}
\author[Han]{Qing Han}
\address{Department of Mathematics\\
University of Notre Dame\\
Notre Dame, IN 46556} \email{qhan@nd.edu}
\address{Beijing International Center for Mathematical Research\\
Peking University\\
Beijing, 100871, China} \email{qhan@math.pku.edu.cn}
\author[Zhang]{Lin Zhang}
\address{School of Mathematical Sciences\\
Peking University\\
Beijing, 100871, China}
\email{linzhang2013@pku.edu.cn}

\begin{abstract}
We study the linear wave equation $\Box_{g}u=0$ in Bondi-Sachs coordinates,
for  an asymptotically flat Lorentz metric $g$. We consider the null-timelike boundary problem,
where an initial value is given on the null surface $\tau=0$ and a boundary value on the timelike surface $r=r_{0}$.
We obtain spacetime $H^{p}$-estimates of $ru$ for $r>r_0$ and
derive an asymptotic exapnsion of $ru$ in terms of ${1}/{r}$ as $r\to\infty$.
\end{abstract}

\thanks{The first author acknowledges the support of NSF
Grant DMS-1404596. The second author acknowledges the support of NSFC
Grant 11571019.}
\maketitle

\section{Introduction}\label{sec-Introduction}

In this paper, we will study the asymptotic behaviors of solutions of the null-timelike boundary problem
for the general wave equations.
In the simplest setting, consider a $C^2$-function $u=u(x,t)$, for $x\in \mathbb R^3$,
satisfying the wave equation
\begin{equation}\label{eq-wave-equation}\partial_{tt}u-\Delta u=0,\end{equation}
in $\{(x,t):\, R<|x|<t\}$, for some $R>0$, and $u=0$ on $t=|x|>R$.
Friedlander \cite{Friedlander1962}-\cite{Friedlander1967} discussed in a more general setting and proved
among other results, that
\begin{equation}\label{eq-definition-radiation-field}
|x|u(x,t)\sim v_0(x/|x|, t-|x|)\quad\text{for large }|x|,\end{equation}
where $v_0$ is a function of the direction $x/|x|\in  S^2$ and of the {\it retarded time} $\tau=t-|x|$,
and is called the {\it radiation field} of $u$.
In terms of spherical coordinates, with $r=|x|$, and the retarded time $\tau$, the equation \eqref{eq-wave-equation}
can be written as
\begin{equation}\label{eq-wave-equation-alternative}-2\partial_{\tau r}(ru)+\partial_{rr}(ru)+\partial_r(ru)
+\frac{1}{r^2}\Delta_{S^2} (ru)=0,\end{equation}
where $\Delta_{S^2}$ is the Beltrami-Laplace operator on $S^2$.

In this paper, we will discuss radiation fields for general asymptotically flat metrics $g$.
Our primary goal is to derive an estimate of expansions in terms of given data
in suitable Sobolev norms.
To achieve these goals,
we will adopt the Bondi-Sachs coordinates
$\{\tau, r, x_{2}, x_{3}\}$, with $\tau\in (0,T)$, $r\in (R,\infty)$, and
$(x_2, x_3)$ forming local coordinates on $S^2$, for some fixed constants $T$ and $R$.
For example, in such coordinates,  Minkowski metric $g_M$ is given by
\begin{equation*}
g_M
=-d\tau^{2}-2d\tau dr+r^{2}g_{S^2}. \end{equation*}
Let $g$ be a sufficiently smooth asymptotically flat Lorentz metric in Bondi-Sachs coordinates.
(Refer to Section \ref{sec-Metrics}
for a precise formulation.)
We consider the wave equation
\begin{equation}\label{eq-wave-equation-general}\Box_g u=0\quad\text{in }(0,T)\times (R, \infty)\times S^2,\end{equation}
together with
\begin{align}\label{eq-wave-condition-general}\begin{split}
u|_{\tau=0}&=\frac{\varphi}{r} \quad\text{on }[R,\infty)\times S^2,\\
u|_{r=R}&=\psi \quad\text{on }[0, T)\times S^2,
\end{split}\end{align}
for some given functions $\varphi$ on $[R,\infty)\times S^2$ and
$\psi$ on $[0, T)\times S^2$.
Here, $\varphi/r$ and $\psi$ are considered to be the initial and boundary values, respectively.
For the function $\varphi$ given on  $(R, \infty)\times S^2$, we define
$$\|\varphi\|_{\widetilde H^{p}((R, \infty)\times S^2)}=
\sum_{|\alpha|\leq p,\alpha=(\alpha_1,\alpha_2,\alpha_3)}\big(\int_R^{\infty}\int_{{S}^{2}}
\frac{|\partial^{\alpha}f|^{2}}{r^{2+4\alpha_1}}drd\Sigma\big)^{\frac{1}{2}}.$$
We say $\varphi\in \widetilde H^{p}((R, \infty)\times S^2)$ if
$\|\varphi\|_{\widetilde H^{p}((R, \infty)\times S^2)}<\infty$.

Our primary goal is to study behaviors of $ru$ as $r\to\infty$. We will prove the following result.

\begin{theorem}\label{thrm-main} For some integer $k\ge 1$,
let $g$ be a $C^{k+2}$ metric given by \eqref{eq-definition-g}
on $[0,T]\times [R,\infty)\times S^2$ and satisfy \eqref{afc-intro} and \eqref{eq-metric-h-intro}.
Suppose $\varphi\in \widetilde H^{2k+5}((R, \infty)\times S^2)$ and
$\psi\in H^{2k+5}((0,T)\times S^2)$, with $\varphi(R,\cdot)=\psi(0,\cdot)$ on $S^2$.
Then, there exists a unique solution $u\in C^{k}([0,T]\times [R, \infty)\times S^2)$
of \eqref{eq-wave-equation-general}-\eqref{eq-wave-condition-general}.
Moreover, there exists  $v_i\in C^{k-i}([0, T]\times S^2)$, $i=0,\cdot\cdot\cdot,k-1$, such that,
\begin{equation}\label{eq-main-estimate}
\left|ru-\sum_{i=0}^{k-1}\frac{v_i}{r^i}\right|\le \frac{C}{r^{k}}\left\{\|\varphi\|_{\widetilde H^{2k+5}((R, \infty)\times S^2)}
+\|\psi\|_{H^{2k+5}((0,T)\times S^2)}\right\},\end{equation}
and
\begin{equation}\label{eq-main-estimate-1}
\|v_i\|_{C^{k-i}([0, T]\times S^2)}\le C\left\{\|\varphi\|_{\widetilde H^{2k+5}((R, \infty)\times S^2)}
+\|\psi\|_{ H^{2k+5}((0,T)\times S^2)}\right\},\end{equation}
where $C$ is a positive constant depending only on $k$, $T$, $R$ and  $g$.
\end{theorem}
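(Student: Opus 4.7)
The natural object is $U=ru$, since the statement is about asymptotics of $ru$ and the principal operator is of Friedlander type $L_0=-2\partial_\tau\partial_r+\partial_r^2+r^{-2}\Delta_{S^2}$ when acting on $U$. Rewriting $\Box_g u=0$ as $LU=0$, the operator $L=L_0+L_1$ where $L_1$ is a lower-order perturbation whose coefficients decay in $r$ thanks to the asymptotic-flatness hypothesis \eqref{afc-intro}, \eqref{eq-metric-h-intro}. The initial/boundary conditions \eqref{eq-wave-condition-general} rewrite as $U|_{\tau=0}=\varphi$ on $[R,\infty)\times S^2$ and $U|_{r=R}=R\psi$ on $[0,T)\times S^2$; the compatibility hypothesis $\varphi(R,\cdot)=\psi(0,\cdot)$ becomes $U$ being continuous at the edge $\{\tau=0,r=R\}$. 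The proof then splits into two main parts.

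\textbf{Part 1: well-posedness and spacetime $H^p$-estimates for $U$.} I would establish existence, uniqueness and quantitative Sobolev control of $U$ by a multiplier argument tailored to the null-timelike geometry. The characteristic direction of $L_0$ is $\partial_\tau\partial_r$, so the correct multiplier is of the form $a\partial_r U+b\partial_\tau U+cU$, with $a,b,c$ chosen so that integration by parts on $[0,\tau_0]\times[R,R']\times S^2$ produces nonnegative boundary integrals along the two characteristic surfaces and gives the flux along $\tau=\tau_0$ controlled by the data on $\tau=0$ and $r=R$. The perturbation $L_1$ contributes lower-order terms that are absorbed via Gr\"onwall, with asymptotic flatness guaranteeing that none of the $1/r$ coefficients spoil the uniform-in-$R'$ bound; the $r^{-2-4\alpha_1}$ weight in $\|\varphi\|_{\widetilde H^p}$ is exactly what makes the $\tau=0$ boundary term match this multiplier. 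Commuting with $\partial_\tau$, $\partial_{x_2},\partial_{x_3}$ (which all preserve the characteristic structure) and with the scaling vector $r\partial_r$ (which explains the weight $r^{-4\alpha_1}$ on $\partial_r^{\alpha_1}\varphi$), then iterating, delivers higher-order spacetime estimates, after which existence follows by approximation on the truncated slab $r\in[R,R']$ with $R'\to\infty$.

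\textbf{Part 2: asymptotic expansion.} Insert the ansatz $U=\sum_{i=0}^{k-1}v_i(\tau,x)/r^i+W_k$ into $LU=0$ and expand in powers of $1/r$. For $L_0$ the coefficient of $1/r^{j+2}$ is
\[
2(j+1)\,\partial_\tau v_{j+1}+\bigl(j(j+1)+\Delta_{S^2}\bigr)v_j,
\]
and the contributions from $L_1$ are lower-order in $1/r$ by asymptotic flatness, producing at each step only $v_\ell$ with $\ell\le j$. Setting each such coefficient to zero gives a transport hierarchy on $[0,T]\times S^2$: each $v_{j+1}$ is obtained by integrating a linear first-order ODE in $\tau$ whose right-hand side involves $v_0,\ldots,v_j$ and their angular derivatives, subject to initial values $v_i(0,\cdot)$ read off from the asymptotic expansion of $\varphi$ at $r=\infty$ (which exists in the claimed regularity thanks to the $\widetilde H^{2k+5}$-norm). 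The free coefficient $v_0$ is determined from the boundary data $\psi$ by observing that $v_0(\tau,\cdot)=\lim_{r\to\infty}U(\tau,r,\cdot)$ together with the compatibility imposed at $r=R$; concretely one sets up $v_0$ via the $L$ equation traced along $r=R$, or equivalently defines it as the pointwise limit justified by Part 1 and then checks it satisfies the leading transport equation. The $v_i$ inherit $C^{k-i}$ regularity from the transport formula, giving \eqref{eq-main-estimate-1}.

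\textbf{Estimating the remainder.} The function $W_k=rU-\sum_i v_i/r^i$ obeys $LW_k=F_k$ with $F_k=O(r^{-k-1})$ in the weighted norm, together with $W_k|_{\tau=0}=O(r^{-k})$ and $W_k|_{r=R}=0$ after the $v_i$'s have absorbed all data up to order $k-1$. I would then rerun the Part 1 energy estimate on $r^kW_k$ (equivalently, inserting $r^{2k}$ weights in the multiplier); the weight commutes with $L_0$ up to lower order, and the asymptotic-flatness decay rates in $L_1$ are exactly strong enough to keep the weighted commutators absorbable. This yields $\|r^kW_k\|_{L^\infty}\lesssim\|\varphi\|_{\widetilde H^{2k+5}}+\|\psi\|_{H^{2k+5}}$ by Sobolev embedding in $(\tau,x_2,x_3)$, which is \eqref{eq-main-estimate}. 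The count $2k+5$ matches: one needs $H^{k+2}$ in $(\tau,x_2,x_3)$ for Sobolev embedding into $C^k$, plus a loss of roughly two derivatives per order of $1/r$ extracted, plus regularity needed to carry out the commutator estimates with the curved perturbation.

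\textbf{Expected main obstacle.} The technical heart is the \emph{weighted} energy estimate with the correct $r$-weights: one must simultaneously (i) handle the timelike boundary $r=R$ (where no smallness is available and the flux must be controlled purely by $\|\psi\|_{H^{2k+5}}$), (ii) keep the energy bounded uniformly as the truncation $R'\to\infty$, and (iii) choose the weights so that commutators with the curved perturbation $L_1$ are absorbable. Getting these three compatible is what forces both the particular weights in the norm $\widetilde H^p$ and the precise decay conditions placed on $h$ in Section \ref{sec-Metrics}, and is the step around which the whole argument is organized.
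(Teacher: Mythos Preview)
Your overall architecture (energy estimates for $U=ru$, then an expansion in $1/r$) matches the paper's, but the execution differs in two places, and one of them hides the real difficulty.

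\textbf{On Part 2.} Your handling of $v_0$ is confused. The transport hierarchy you write down begins with an equation for $v_1$ in terms of $v_0$; there is no equation determining $v_0$. You say $v_0$ is ``determined from the boundary data $\psi$'' or ``via the $L$ equation traced along $r=R$'' --- neither is correct, since $v_0$ lives at $r=\infty$ and is the \emph{radiation field}, an output of the problem, not recoverable from data at $r=R$. Your fallback, defining $v_0$ as the pointwise limit of $U$ at $r=\infty$, is the right answer, but once you have enough regularity to justify that limit, the whole transport-plus-remainder machinery is unnecessary: the paper simply changes variables to $z=1/r$, proves $v=ru$ is $C^m$ up to the hypersurface $z=0$, and then \emph{Taylor expands} at $z=0$. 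The $v_i$ are $\frac{1}{i!}\partial_z^i v|_{z=0}$ and the remainder bound is Taylor's theorem. (Also, your claim $W_k|_{r=R}=0$ is false: at $r=R$ the remainder is $R\psi-\sum_i v_i/R^i$, which has no reason to vanish.)

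\textbf{On Part 1.} This is where the paper's real work lies, and your sketch underestimates it. The issue is not Gr\"onwall absorption of lower-order terms. It is that the basic $H^1$ estimate (the paper's Theorem \ref{maintheo}) bounds $\|v\|_{H^1(\Omega)}$ by the $H^1$-norms of \emph{both} $\varphi$ and $\psi$ --- not the $L^2$-norm of $\psi$ --- so iterating to higher order requires controlling $\|\partial_z\partial^\alpha v\|_{L^2(\Sigma_1)}$, a normal derivative on the timelike boundary that is not given data. For general (non-Minkowski) metrics the equations obtained by commuting only with angular fields do \emph{not} form a closed system; one must commute with angular and $\partial_z$ together, and then separately feed in $\partial_\tau$ derivatives, incurring a half-derivative loss in $\tau$ (this is why the regularity count is $2k+5$ rather than $k+\text{const}$). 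The paper handles the boundary coupling by proving a second $H^1$ estimate (Theorem \ref{maintheo-another}) with $\|N_2 v\|_{L^2(\Sigma_1)}$ on the right, and then runs a careful bootstrap alternating the two estimates. Your plan to ``commute with $\partial_\tau,\partial_{x_2},\partial_{x_3},r\partial_r$ and iterate'' does not address this closed-system obstruction, which the paper singles out as the main point.
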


The function $v_0$ is the {\it radiation field} of $u$.
Once $v_0$ is known, the rest of $v_i$'s can be computed explicitly, in terms of $\varphi$, $\psi$, $v_0$
and the metric $g$. The estimate \eqref{eq-main-estimate-1} illustrates that there is a loss of differentiation
by a {\it half}. This is typical for the null-timelike boundary problem,
or the characteristic initial value problem, as
\eqref{eq-wave-equation-general}-\eqref{eq-wave-condition-general}.

The radiation field, or more generally, the expansion \eqref{eq-main-estimate},
describes asymptotic behaviors of solutions as $r\to\infty$. The study of the radiation fields is closely related to
that of the gravitational waves as in \cite{Bondi1962} and \cite{Sachs1962}.
In this paper, following Friedlander \cite{Friedlander1962}-\cite{Friedlander1967},
we study such expansions for the null-timelike boundary problem and provide a relatively precise
estimate. Radiation fields for the initial-value problem is also intensively studied.
The existence of radiation fields and asymptotic behaviors are discussed in
a various of settings,  on asymptotically Euclidean manifolds \cite{Baskin2}, \cite{Baskin1},
\cite{Friedlander1980}, \cite{Friedlander2001}, \cite{A6},
on asymptotically hyperbolic spaces \cite{A3},
on the Schwarzschild space \cite{Wang}, on the de Sitter-Schwarzschild space \cite{Melrose},
and, in a nonlinear setting, for the Einstein vacuum equations \cite{Wang2}.
Properties of radiation fields were studied in \cite{Baskin3}, \cite{A2}, \cite{A}, \cite{A4}.
The study of radiation fields is also related to that of the Bondi mass and Bondi-Sachs metrics
\cite{ZX2}, \cite{ZX}, \cite{ZX3}.

A significant portion of this paper is devoted to a derivation of energy estimates associated with the problem
\eqref{eq-wave-equation-general} and \eqref{eq-wave-condition-general}. With these energy estimates, we can
obtain the existence of solutions of \eqref{eq-wave-equation-general} and \eqref{eq-wave-condition-general}
and the existence of functions $v_i$ together with the estimate \eqref{eq-main-estimate}
and \eqref{eq-main-estimate-1}.

In the classical energy estimates for the initial/boundary value problem of the wave equation, the $H^1$-norm of
solutions in domains is estimated in terms of the $H^1$-norm of the initial values and the $L^2$-norm of the boundary
values. This permits us to iterate such estimates for higher order derivatives. However, in our case,
the $H^1$-norm of
solutions in domains is estimated in terms of the $H^1$-norm of the initial values and also the $H^1$-norm of the boundary
values. (See Theorem \ref{maintheo}.)
The presence of the $H^1$-norm of the boundary values causes difficulties when we attempt to apply such estimates
to higher order derivatives.  Derivatives of solutions along the normal direction
restricted to boundary are not controlled by known quantities.
In fact, normal derivatives of a certain order can be estimated in terms of derivatives
in domains of the same order. This will form a cycle when we attempt to estimate derivatives.
Specifically, a collection of derivatives of some order in the domain can be estimated in terms of normal
derivatives of the same order on boundary, which in turn can be estimated in terms of
another collection of derivatives of the same order in the domain, which is yet
to be controlled. To break this cycle, we need to form a
{\it closed} system of equations  in the sense that
the above mentioned two collections of derivatives of the same order are identical.
For the characteristic initial-value problem such as
\eqref{eq-wave-equation-general}-\eqref{eq-wave-condition-general},
there is also a loss of differentiation along the time direction.
We need to find a closed system to minimize such a loss.

For the Minkowski metric, the system for derivatives with respect to only spherical coordinates
forms a closed system,
and the system for derivatives with respect to spherical and radial coordinates
also forms a closed system.
In this case, we can estimate derivatives with respect to only spherical coordinates first,
then  derivatives with respect to spherical and radial coordinates, and last arbitrary derivatives.
Refer to \cite{Balean1997}.
For the general metric, the system for derivatives with respect to only spherical coordinates
does not necessarily form a closed system. However, the system for derivatives with respect
to spherical and radial coordinates forms a closed system. We can start with this system
and derive estimates of derivatives  with respect
to spherical and radial coordinates. There is no loss of differentiation at this step. Then, we extend
our estimates to derivatives with respect time.
Usually, there is a loss of regularity by {\it half} along the time direction. Refer to \cite{Hagen1977}
for more details.

The paper is organized as follows. In Section \ref{sec-Metrics}, we rewrite the metric $g$
in \eqref{eq-definition-g} and the wave equation \eqref{eq-wave-equation-general} in a bounded domain.
In Section \ref{sec-H1-Estimates} and Section \ref{sec-Hk-Estimates},
we derive energy estimates for the newly formulated wave equation.
These two sections form the main part of the paper.
With these estimates, we prove the existence of solutions and the existence of radiation fields
in Section \ref{sec-Existence}.

We would like to thank Xiao Zhang for many helpful discussions.

\section{Metrics}\label{sec-Metrics}

In this section, we present a precise formulation of metrics $g$ introduced in the introduction and
represent them in a bounded domain.

Let $g$ be a Lotentz metric.
In the following, we adopt the Bondi-Sachs coordinates
$\{\tau, r, x_{2}, x_{3}\}$, with $\tau\in (0,T)$, $r>R$, and
$(x_2, x_3)$ forming local coordinates on $S^2$, for some fixed constants $T$ and $R$.
Refer to \cite{Bondi1962} or \cite{Sachs1962} for details.
Suppose $g$ is given by
\begin{equation}\label{eq-definition-g}
g
=-Ve^{2\eta}d\tau^{2}-2e^{2\eta}d\tau dr+r^{2}h_{AB}(dx^{A}-U^{A}d\tau)(dx^{B}-U^{B}d\tau),
\end{equation}
where $V, \eta, U^{A}, h_{AB}$ are functions of $\tau, r, x^A$, with $A, B=2, 3$, and that $\det(h_{AB})$ is independent of $\tau$ and $r$.
%We will study the equation
%$$\Box_g u=\frac{f}{r^3}.$$
As in \cite{Sachs1962}, we assume that $g$ is asymptotically flat, i.e., as $r\to\infty$,
\begin{equation}\label{afc-intro}
V\to 1,\quad
rU^{A}\to 0, \quad \eta\to 0,
\end{equation}
and
\begin{align}\label{eq-metric-h-intro}h\equiv h_{AB}dx^Adx^B \rightarrow g_{S^2},\end{align}
uniformly on $[0,T]\times S^2$, where $g_{S^2}$ is the standard round metric on $S^2$.
We also assume that derivatives  of $V$, $\eta$, $U^{A}$, $h_{AB}$ up to a certain order
with respect to $\tau, z=1/r, x^A$ are bounded.

The Minkowski metric is given by \eqref{eq-definition-g} with
$$V=1,\ \eta=0,\ U^A=0, \ h = g_{S^2},$$
and the Schwarzschild metric with
$$V=1-\frac{2M}{r},\ \eta=0,\ U^A=0, \ h = g_{S^2},$$
for some positive constant $M$.

Our primary goal in this paper is to study the equation
\begin{equation}\label{eq-equation-1}
\Box_g u=0,\end{equation}
and analyze behaviors of $ru$ as $r\to\infty$.

Following \cite{Friedlander1962} and \cite{Friedlander1967}, we introduce
a change of variables
$$z=\frac{1}{r}.$$
In the following, we denote the new coordinates by $(x^0, x^1, x^2, x^3)$, with $x^0=\tau$ and $x^1=z$.
Then, in these coordinates,
\begin{equation*}
g=-Ve^{2\eta}d\tau^{2}+\frac{2}{z^{2}}e^{2\eta}d\tau dz
+\frac{1}{z^{2}}h_{AB}(dx^{A}-U^{A}d\tau)(dx^{B}-U^{B}d\tau).
\end{equation*}
The associated wave operator is defined by
\begin{align*}
\Box_{{g}}u=\frac{1}{(-\det {g})^{{1}/{2}}}\sum\limits_{i=0}^{3}\partial_{i}[(-\det {g})^{{1}/{2}}{g}^{ij}\partial_ju].
\end{align*}
A straightforward calculation yields
\begin{equation*}
{g}^{ij}=\begin{bmatrix}
0& z^{2}e^{-2\eta}&0 &0\\
z^{2}e^{-2\eta}&z^{4}Ve^{-2\eta}&z^{2}U^{2}e^{-2\eta}&z^{2}U^{3}e^{-2\eta}\\
0&z^{2}U^{2}e^{-2\eta}&z^{2}h^{22}&z^{2}h^{23}\\
0&z^{2}U^{3}e^{-2\eta}&z^{2}h^{32}&z^{2}h^{33}
\end{bmatrix},
\end{equation*}
where $h^{AB}h_{BC}=\delta^{A}_{C}$.
As in \cite{Friedlander1967},  we introduce the conformal metric $\hat{g}$ given by
\begin{equation*}
\hat{g}=z^{2}{g}=-z^2Ve^{2\eta}d\tau^{2}+2e^{2\eta}d\tau dz
+h_{AB}(dx^{A}-U^{A}d\tau)(dx^{B}-U^{B}d\tau).
\end{equation*}
Set
\begin{equation}\label{eq-definition-v}v=ru.\end{equation}
Then,
\begin{equation*}
\Box_{g}u=z^{3}\Box_{\hat{g}}v+v\Box_{g}z.
\end{equation*}
Note
\begin{align*}
\Box_{g}z&=\frac{1}{(-\det {g})^{{1}/{2}}}\sum\limits_{i=0}^{3}\partial_{i}[(-\det {g})^{{1}/{2}}{g}^{i1}]\\
&=\frac{z^{4}}{e^{2\eta}\kappa}\{z^{-2}\partial_{\tau}\kappa-\partial_{z}(\kappa V)+z^{-2}[\partial_{x_{A}}(\kappa U^{A})]\},
\end{align*}
where $\kappa=[\det(h_{AB})]^{{1}/{2}}$.
By \eqref{afc-intro}, in particular $\lim\limits_{r\rightarrow\infty}(rU^{A})=0$, and since $\kappa$ is independent of $\tau$, we write
$$\Box_{g}z=e^{-2\eta}z^{3}\omega,$$
where $\omega$ is a sufficiently smooth function.
The equation \eqref{eq-equation-1} reduces to the following equation for $v$:
\begin{equation*}
e^{2\eta}\Box_{\hat{g}}v+\omega v=0.
\end{equation*}
Next, we introduce another conformal metric
\begin{equation}\label{eq-metric-g-bar}\bar{g}=e^{-2\eta}\hat{g}=-z^2Vd\tau^{2}+2d\tau dz
+e^{-2\eta}h_{AB}(dx^{A}-U^{A}d\tau)(dx^{B}-U^{B}d\tau).
\end{equation}
Then, $v$ satisfies
\begin{equation}\label{eq-quation-main-form}
\Box_{\bar{g}}v+a^{i}\partial_{i}v+\omega v=0,
\end{equation}
where
$a^{i}$ are functions expressed in terms of $V$, $U^{A}$, ${g}^{ij}$, $\eta$, $\partial_{i}\eta$, for $i=1, 2, 3$.

The equation \eqref{eq-quation-main-form} is the main equation we will study in this paper.
We note that $\bar{g}^{11}=z^2V=0$ on $z=0$. This causes a degeneracy in the operator $\Box_{\bar g}$ on $z=0$.

As an example, we consider the Schwarzschild metric $g_{S}$ given by
\begin{equation*}
g_{S}=-(1-\frac{2M}{r})dt^2+(1-\frac{2M}{r})^{-1}dr^2+r^{2}g_{S^2},
\end{equation*}
where $M$ is a constant.
Let $u$ satisfy the wave equation
\begin{equation*}
\Box_{g_{S}}u=0.
\end{equation*}
Set
\begin{equation*}
\tau=t-r-2M\log(r-2M), \quad z=\frac{1}{r}.
\end{equation*}
Then,
$$g_{S}=-(1-2Mz)d\tau^{2}+2z^{-2}d\tau dz+z^{-2}dg_{S^2},$$
and $v$ given by \eqref{eq-definition-v} satisfies
\begin{equation*}
2v_{z\tau}+z^{2}(1-2Mz)v_{zz}+\Delta_{S^2}v+2z(1-3Mz)v_{z}-2Mzv=0.
\end{equation*}

\section{$H^1$-Estimates}\label{sec-H1-Estimates}

In this section, we study a class of the timelike/null problem and derive $H^1$-estimates.

Consider the coordinates $\{x_{0}, x_{1}, x_{2}, x_{3}\}$, with $x_0=\tau$, $x_1=z$, and
$(x_2, x_3)$ forming local coordinates on $S^2$.
For some fixed $T>0$ and $z_0>0$, set
\begin{align*}
\Omega=\{ (\tau, z)|0<\tau< T, 0<z< z_{0}\}\times S^{2},
\end{align*}
and
\begin{align*}
\Sigma_{0}&=\{(0,z)|0<z\leq z_{0}\}\times S^{2},\\
\Sigma_{1}&=\{(\tau, z_{0})|0\leq\tau\leq T\}\times S^{2}.
\end{align*}
Let $V$, $U^A$, and $h_{AB}$ be functions on $\Omega$ such that, as $z\to 0$,
$$V\rightarrow 1, \quad U^A\rightarrow 0,$$
and
$$h_{AB}dx^Adx^B \rightarrow g_{S^2},$$
uniformly on $[0,T]\times S^2$, where $g_{S^2}$ is the standard round metric on $S^2$.

Let $g$ be the metric given by
\begin{equation}\label{eq-metric-g}
g=-z^2Vd\tau^{2}+2d\tau dz+h_{AB}(dx^{A}-U^{A}d\tau)(dx^{B}-U^{B}d\tau).
\end{equation}
The notations here and hereafter are different from those in the previous section:
$g$ and $h_{AB}$ in \eqref{eq-metric-g} are $\bar g$ and $e^{-2\eta}h_{AB}$ in \eqref{eq-metric-g-bar}.
If we write
$g=g_{ij}dx^idx^j$, then $g_{AB}=h_{AB}$,
\begin{align*}
&g_{00}=-z^{2}V+g_{AB}U^{A}U^{B}, \quad g_{0A}=-g_{AB}U^B,\\
&g_{01}=g_{10}=1, \quad g_{11}=g_{12}=g_{13}=0.
\end{align*}
Moreover, we assume
\begin{equation}\label{eq-assumption-V}\frac{1}{2}\leq V\leq\frac{3}{2}
\quad\text{in }\Omega\cup\Sigma_{0}\cup\Sigma_{1},\end{equation}
and, for some positive constants $\lambda$ and $\Lambda$,
\begin{equation}\label{eq-assumption-ellipticity}
\lambda I_2\le (h_{AB})\le \Lambda I_2\quad\text{in }\Omega.\end{equation}
We note that the Minkowski metric and the Schwarzschild metric satisfy these assumptions.
A straightforward computation yields
\begin{equation*}
g^{ij}=\begin{bmatrix}
0& 1&0 &0\\
1&z^{2}V&U^{2}&U^{3}\\
0&U^{2}&h^{22}&h^{23}\\
0&U^{3}&h^{32}&h^{33}
\end{bmatrix},
\end{equation*}
where $h_{AB}h^{BC}=\delta^C_A$.
Next, we set
\begin{equation}\label{eq-definition-gamma}
\gamma=\det(h_{AB}).\end{equation}
Then, $\gamma=-\det(g_{ij})$ and, by \eqref{eq-assumption-ellipticity},
$$\lambda\leq\sqrt{\gamma}\leq\Lambda\quad\text{in }\Omega.$$
We also have
\begin{align}\label{eq-expression-box}
\Box_gv=2\partial_{z\tau}v+z^{2}V\partial_{zz}v+2g^{1A}\partial_{zA}v+g^{AB}\partial_{AB}v
+\frac{1}{\sqrt{\gamma}}\partial_{i}(\sqrt{\gamma}g^{ij})\partial_{j}v.
\end{align}

Consider the operator
\begin{equation}\label{eq-operator-L}Lv=\Box_{g}v+a^{i}\partial_{i}v+\omega v,\end{equation}
for some given functions $a^i$ and $\omega$.
For given functions $f$ in $\Omega$, $\varphi$ on $\Sigma_0$ and $\psi$ on $\Sigma_1$,
we consider the following problem
\begin{align}\label{eq}\begin{split}
Lv&=f \quad\text{in } \Omega, \\
v|_{\Sigma_{0}}&=\varphi \quad \text{on } \Sigma_{0}, \\
v|_{\Sigma_{1}}&=\psi \quad \text{on } \Sigma_{1}.
\end{split}\end{align}

Next, we set
\begin{equation}\label{eq-definition-N}N_{1}=-\partial_{z},\quad
N_{2}=\partial_{\tau}+\frac{1}{2}z^{2}V\partial_{z}+g^{1A}\partial_{A}.\end{equation}
These two vector fields play an important role in this section.
It is easy to check
$$g(N_{1},N_{1})=0,\quad g(N_{2}, N_{2})=0,\quad g(N_{1}, N_{2})=-1.$$
Consider the hypersurfaces $\tau=const.$ and $z=const.$ Then,
\begin{equation}\label{eq-nabla-tau}\nabla \tau = \partial_z=-N_1,\end{equation}
and
\begin{equation}\label{eq-nabla-z}
\nabla z =\partial_{\tau}+z^{2}V\partial_{z}+g^{1A}\partial_{A}=-\frac12z^2VN_1+N_2.\end{equation}
Hence,
$$g(\nabla \tau, \nabla \tau)=0,\quad g(\nabla z, \nabla z)=z^{2}V.$$
Thus, the hypersurface $\tau=const.$ is null and the hypersurface $z=const.$ is timelike for $z>0$.

In \eqref{eq}, the initial value $\varphi$ is prescribe on the null hypersurface $\Sigma_0$ and
the boundary value $\psi$ is prescribed on the timelike hypersurface $\Sigma_1$.
Our primary goal in this section is to derive energy estimates.

\subsection{The Energy Momentum Tensor}
We first introduce an important quantity.
For a $C^1$-function $\phi$, the {\it associated energy momentum tensor} $Q[\phi]$ is a symmetric 2-tensor
defined by, for any vector fields $X$ and $Y$,
\begin{equation*}
Q[\phi](X,Y)=(X\phi)(Y\phi)-\frac{1}{2}g(X,Y)|\nabla\phi|^{2}-g(X,Y)\phi^{2}.
\end{equation*}

\begin{lemma}\label{lem}
Let $\phi$ be a given $C^{2}$-function and $Q[\phi]$ be the associated energy momentum tensor.
Set $X=a_{1}N_{1}+a_{2}N_{2}$ and $Y=a_{3}N_{1}+a_{4}N_{2}$, for some $a_i$, $i=1,2,3,4$. Then,
\begin{align*}
Q[\phi](X, Y)= a_{1}a_{3}(\partial_{z}\phi)^{2}+a_{2}a_{4}(N_{2}\phi)^{2}
+(a_{1}a_{4}+a_{2}a_{3})(\frac{1}{2}g^{AB}\partial_{A}\phi\partial_{B}\phi+\phi^{2}).
\end{align*}
Moreover, if $a_{i}\geq0$, $i=1,2,3,4$, then
$Q[\phi](X,Y)\geq0$.
\end{lemma}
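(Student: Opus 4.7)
The proof is essentially a direct algebraic computation, exploiting the null-frame structure of $N_1$ and $N_2$. My plan is to expand each of the three summands in the definition of $Q[\phi](X,Y)$ in terms of $N_1\phi$, $N_2\phi$, and the angular derivatives $\partial_A\phi$, and then observe that the cross terms $(N_1\phi)(N_2\phi)$ cancel, leaving only the claimed expression.

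First, using the bilinearity of $g$ together with the identities $g(N_1,N_1)=g(N_2,N_2)=0$ and $g(N_1,N_2)=-1$, I would compute
\[ g(X,Y)=-(a_1a_4+a_2a_3), \]
so that the last term in $Q$ contributes $(a_1a_4+a_2a_3)\phi^2$. Expanding $(X\phi)(Y\phi)$ and using $N_1\phi=-\partial_z\phi$ gives the three terms
\[ a_1a_3(\partial_z\phi)^2+(a_1a_4+a_2a_3)(N_1\phi)(N_2\phi)+a_2a_4(N_2\phi)^2. \]

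The main step is rewriting $|\nabla\phi|^2=g^{ij}\partial_i\phi\,\partial_j\phi$ in the frame $\{N_1,N_2,\partial_A\}$. Reading off $g^{ij}$ from the matrix displayed before \eqref{eq-definition-gamma}, I get
\[ |\nabla\phi|^2=2\partial_\tau\phi\,\partial_z\phi+z^2V(\partial_z\phi)^2+2g^{1A}\partial_z\phi\,\partial_A\phi+g^{AB}\partial_A\phi\,\partial_B\phi. \]
Computing $(N_1\phi)(N_2\phi)$ directly from the definitions \eqref{eq-definition-N}, the first three terms are exactly $-2(N_1\phi)(N_2\phi)$, so
\[ |\nabla\phi|^2=-2(N_1\phi)(N_2\phi)+g^{AB}\partial_A\phi\,\partial_B\phi. \]
Substituting this into $-\tfrac12 g(X,Y)|\nabla\phi|^2$ yields a term $-(a_1a_4+a_2a_3)(N_1\phi)(N_2\phi)$, which exactly cancels the cross term from $(X\phi)(Y\phi)$, plus $\tfrac12(a_1a_4+a_2a_3)g^{AB}\partial_A\phi\,\partial_B\phi$. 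Combining the three pieces gives the claimed formula.

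The positivity statement is then immediate: when all $a_i\geq 0$, the coefficients $a_1a_3$, $a_2a_4$, and $a_1a_4+a_2a_3$ are nonnegative, while $g^{AB}\partial_A\phi\,\partial_B\phi=h^{AB}\partial_A\phi\,\partial_B\phi\geq 0$ by the ellipticity assumption \eqref{eq-assumption-ellipticity}. The only subtle ingredient is the identification of the frame decomposition of $|\nabla\phi|^2$, but since $N_1,N_2$ span the $(\tau,z)$-plane and are null with $g(N_1,N_2)=-1$, this is forced and the calculation is straightforward rather than being a real obstacle.
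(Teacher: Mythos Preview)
Your proof is correct and follows essentially the same route as the paper: both rely on computing $|\nabla\phi|^2$ in coordinates and recognizing that the $(\tau,z)$-part equals $-2(N_1\phi)(N_2\phi)$, so that the cross terms cancel. The only cosmetic difference is that the paper first invokes bilinearity of $Q$ to reduce to evaluating $Q[\phi](N_i,N_j)$ for $i,j\in\{1,2\}$ and then substitutes, whereas you expand the three constituents of $Q$ directly; the underlying computation is identical.
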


\begin{proof}
First, we have
\begin{align*}
Q[\phi](X, Y)= a_{1}a_{3}Q[\phi](N_{1}, N_{1})+a_{2}a_{4}Q[\phi](N_{2}, N_{2})+(a_{1}a_{4}+a_{2}a_{3})Q[\phi](N_{1}, N_{2}).
\end{align*}
Next,
\begin{equation*}
|\nabla\phi|^{2}=2\partial_{\tau}\phi\partial_{z}\phi+z^{2}V(\partial_{z}\phi)^{2}+
2g^{1A}\partial_{z}\phi\partial_{A}\phi+g^{AB}\partial_{A}\phi\partial_{B}\phi,
\end{equation*}
and then
\begin{align*}
&Q[\phi](N_{1}, N_{1})=(\partial_{z}\phi)^{2}, \
Q[\phi](N_{2}, N_{2})=(N_{2}\phi)^{2}, \\
&Q[\phi](N_{1}, N_{2})=\frac{1}{2}g^{AB}\partial_{A}\phi\partial_{B}\phi+\phi^{2}.
\end{align*}
A simple substitution yields the desired results.
\end{proof}

Take a vector field
$$Y=Y^{0}\partial_{\tau}+Y^{1}\partial_{z}+Y^{A}\partial_{A}.$$
By \eqref{eq-nabla-tau}, \eqref{eq-nabla-z}, and Lemma \ref{lem}, we have
\begin{align}\label{ef1}\begin{split}
Q[\phi](\nabla\tau, Y)&= (Y^{1}-\frac{1}{2}z^{2}VY^{0})(\partial_{z}\phi)^{2}+
(Y^{A}-Y^{0}g^{1A})\partial_{z}\phi\partial_{A}\phi\\
&\qquad -\frac{1}{2}Y^{0}g^{AB}\partial_{A}\phi\partial_{B}\phi-Y^{0}\phi^{2},
\end{split}\end{align}
and
\begin{align}\label{ef2}\begin{split}
Q[\phi](\nabla z, Y)&= Y^{0}(\partial_{\tau}\phi)^{2}
+\frac{1}{2}z^{2}VY^{1}(\partial_{z}\phi)^{2}+z^{2}VY^{0}\partial_{\tau}\phi\partial_{z}\phi\\
&\qquad
+(Y^{A}+Y^{0}g^{1A})\partial_{A}\phi\partial_{\tau}\phi+z^{2}VY^{A}\partial_{A}\phi\partial_{z}\phi\\
&\qquad +(g^{1A}Y^{B}-\frac{1}{2}Y^{1})\partial_{A}\phi\partial_{B}\phi-Y^{1}\phi^{2}.
\end{split}\end{align}
By choosing $Y^0$ and $Y^1$ appropriately, we can make the coefficient of $(\partial_{z}\phi)^{2}$ in \eqref{ef1}
and the coefficient of $(\partial_{\tau}\phi)^{2}$ in \eqref{ef2} positive. It is natural to consider
a linear combination of $\tau$ and $z$.

We now introduce a quantity for integration by parts.
For a vector field $X$, the {\it deformation tensor} of $X$ is a symmetric 2-tensor $^{(X)}\pi$ defined by,
for any vector fields $Y$ and $Z$,
\begin{equation*}
^{(X)}\pi(Y,Z)=g(\nabla_{Y}X, Z)+g(\nabla_{Z}X, Y).
\end{equation*}
In local coordinates, we have
\begin{equation}\label{deftensor}
^{(X)}\pi^{\alpha\beta}=\partial^{\alpha}X^{\beta}+\partial^{\beta}X^{\alpha}-X(g^{\alpha\beta}).
\end{equation}

\begin{lemma}\label{lemma1}
Let $\phi$ be a $C^{2}$-function and $Q[\phi]$ be the associated energy momentum tensor.
Let $X$ be a vector field and set $P_{\alpha}=Q[\phi]_{\alpha\beta}X^{\beta}$. Then,
\begin{equation*}
\mathrm{div}P\equiv \nabla_{\alpha}P^{\alpha}
=(\Box_{g}\phi)(X\phi)+\frac{1}{2}\ Q[\phi]_{\alpha\beta}\ ^{(X)}\pi^{\alpha\beta}
-2g(X, \nabla\phi)\phi.
\end{equation*}
\end{lemma}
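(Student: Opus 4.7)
The plan is to prove this by a direct coordinate (or abstract index) computation, using the symmetry of $Q[\phi]$ and the fact that $\phi$ is a scalar so second covariant derivatives commute. I would first raise an index on $P$ and write
\[
\nabla_{\alpha}P^{\alpha}=(\nabla_{\alpha}Q[\phi]^{\alpha\beta})X_{\beta}+Q[\phi]^{\alpha\beta}\nabla_{\alpha}X_{\beta},
\]
so the identity splits into computing the divergence of $Q[\phi]$ and handling the contraction with $\nabla X$.

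For the first piece, starting from
\[
Q[\phi]_{\alpha\beta}=\partial_{\alpha}\phi\,\partial_{\beta}\phi-\tfrac{1}{2}g_{\alpha\beta}|\nabla\phi|^{2}-g_{\alpha\beta}\phi^{2},
\]
I would compute $\nabla^{\alpha}Q[\phi]_{\alpha\beta}$ term by term. The derivative of the first term produces $(\Box_{g}\phi)\partial_{\beta}\phi+\nabla^{\alpha}\phi\,\nabla_{\alpha}\nabla_{\beta}\phi$. Since $\phi$ is a scalar, $\nabla_{\alpha}\nabla_{\beta}\phi=\nabla_{\beta}\nabla_{\alpha}\phi$, so $\nabla^{\alpha}\phi\,\nabla_{\alpha}\nabla_{\beta}\phi=\tfrac{1}{2}\nabla_{\beta}|\nabla\phi|^{2}$, which cancels exactly against the contribution from $-\tfrac{1}{2}g_{\alpha\beta}|\nabla\phi|^{2}$ (using $\nabla g=0$). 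The contribution of $-g_{\alpha\beta}\phi^{2}$ yields $-2\phi\,\partial_{\beta}\phi$. Hence
\[
\nabla^{\alpha}Q[\phi]_{\alpha\beta}=(\Box_{g}\phi)\partial_{\beta}\phi-2\phi\,\partial_{\beta}\phi,
\]
and contracting with $X^{\beta}$ gives $(\Box_{g}\phi)(X\phi)-2g(X,\nabla\phi)\phi$, which accounts for two of the three terms on the right-hand side of the claimed identity.

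For the second piece, I would use that $Q[\phi]^{\alpha\beta}$ is symmetric, so
\[
Q[\phi]^{\alpha\beta}\nabla_{\alpha}X_{\beta}=\tfrac{1}{2}Q[\phi]^{\alpha\beta}\bigl(\nabla_{\alpha}X_{\beta}+\nabla_{\beta}X_{\alpha}\bigr)=\tfrac{1}{2}Q[\phi]_{\alpha\beta}\,^{(X)}\pi^{\alpha\beta},
\]
after lowering/raising indices and recognizing the deformation tensor from \eqref{deftensor} (the agreement $\nabla^{\alpha}X^{\beta}+\nabla^{\beta}X^{\alpha}=\partial^{\alpha}X^{\beta}+\partial^{\beta}X^{\alpha}-X(g^{\alpha\beta})$ follows from expanding covariant derivatives in Christoffel symbols and using the standard formula $\partial_{\gamma}g^{\alpha\beta}=-g^{\alpha\mu}g^{\beta\nu}\partial_{\gamma}g_{\mu\nu}$ together with the definition of $\Gamma$). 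Summing the two pieces gives the identity.

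There is no serious obstacle; the computation is mechanical once one recognizes the two crucial observations: the commutation $\nabla_{\alpha}\nabla_{\beta}\phi=\nabla_{\beta}\nabla_{\alpha}\phi$ for scalars (needed to produce the cancellation that isolates the wave operator), and the symmetry of $Q[\phi]$ (needed to symmetrize $\nabla X$ into the deformation tensor). The only mildly delicate point is bookkeeping of indices and the sign when passing between $^{(X)}\pi_{\alpha\beta}$ and $^{(X)}\pi^{\alpha\beta}$, which I would handle by doing the contraction in mixed form $Q[\phi]^{\alpha}{}_{\beta}\,^{(X)}\pi_{\alpha}{}^{\beta}$ and then raising/lowering indices at the end.
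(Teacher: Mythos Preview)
Your computation is correct and is the standard route: split $\nabla_{\alpha}P^{\alpha}$ via the Leibniz rule, compute $\nabla^{\alpha}Q[\phi]_{\alpha\beta}$ using the symmetry of the Hessian of a scalar, and symmetrize $\nabla X$ into the deformation tensor using the symmetry of $Q$. The paper does not supply its own argument here but simply refers the reader to Alinhac's book, where essentially the same calculation is carried out, so your approach coincides with the intended one.
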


Refer to Page 33 \cite{Alinhac} for a proof.

For any $C^1$-function $h$, by Lemma \ref{lemma1}, we have
\begin{align}\label{divf}
\mathrm{div}(hP)
=(\Box_{g}\phi)(hX\phi)+\frac{1}{2}\ Q[\phi]_{\alpha\beta}\ ^{(hX)}\pi^{\alpha\beta}
-2g(X, \nabla\phi)h\phi,
\end{align}
where
\begin{equation*}
Q[\phi]_{\alpha\beta}\ ^{(hX)}\pi^{\alpha\beta}=hQ[\phi]_{\alpha\beta}\ ^{(X)}\pi^{\alpha\beta}+2Q[\phi](\nabla h, X).
\end{equation*}

We now start to derive energy estimates.
Balean \cite{Balean1997} used two different foliations of the spacetime
to obtain energy estimates. In this paper, we use weighted energy inequalities
so that interior terms can control all derivatives.

For functions defined on $\Sigma_0$, we can define the $H^p(\Sigma_0)$ space and the corresponding
$H^p(\Sigma_0)$-norm, with derivatives taken only with respect to variables on $\Sigma_0$. Similarly, we can
define the $H^p(\Sigma_1)$ space and the corresponding
$H^p(\Sigma_1)$-norm.

We prove an $H^1$-estimate of solutions in this section
and derive higher order estimates in the next section. There are two issues related to the special features of \eqref{eq}.
First, we do not assume solutions are regular up to $z=0$
and hence we cannot integrate in the entire domain $\Omega$ directly.
We need to choose a special domain away from $z=0$ by a small
distance and then take limit. Second, estimates of domain integrals should be in terms of
initial values on $\Sigma_0$ and boundary values on $\Sigma_1$. For example,
with $\varphi$ given as a $C^1$-function on $\Sigma_0$, $\partial_z \varphi$ and $\nabla_{S^2}v$
are considered to be known functions. However, $\varphi$ does not
yield any information on $\partial_\tau v|_{\Sigma_0}$.

\begin{theorem}\label{maintheo} Suppose $f\in L^{2}(\Omega)$,
$\varphi\in H^{1}(\Sigma_{0})$, and $\psi\in H^{1}(\Sigma_{1})$. Let $v$ be a
$C^2(\Omega)\cap C^1(\Omega\cup\Sigma_0\cup\Sigma_1)$-solution of
\eqref{eq}.
Then,
\begin{equation}\label{eq-estimate-H1}
\|v\|_{H^{1}(\Omega)}+\|\partial_{z}v\|_{L^{2}(\Sigma_1)}\leq C\{\|f\|_{L^{2}(\Omega)}
+\|\varphi\|_{H^{1}(\Sigma_{0})}+\|\psi\|_{H^{1}(\Sigma_{1})}\},
\end{equation}
where $C$ is a positive constant depending only on $z_0$, $T$, $\lambda$, $\Lambda$,
$|g^{ij}|_{C^{1}(\Omega)}$, $ |a^{\alpha}|_{L^{\infty}(\Omega)}$, and
$|\omega|_{L^{\infty}(\Omega)}$.
\end{theorem}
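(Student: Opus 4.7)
My plan is to derive a weighted energy identity via the divergence theorem applied to $P_\alpha = Q[v]_{\alpha\beta}(hX)^\beta$ on the truncated domain $\Omega_\epsilon = \Omega \cap \{z > \epsilon\}$, and then pass to the limit along a sequence $\epsilon_n \to 0$ chosen so that the boundary flux at $\{z = \epsilon_n\}$ vanishes. The multiplier I would use in the null basis is $X = N_1 + c N_2$ for a large constant $c$, together with a scalar weight $h = e^{-\lambda \tau}$ with $\lambda$ large; the null-frame decomposition in Lemma~\ref{lem} then makes every boundary flux transparent.

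On the null initial surface $\Sigma_0$ the outward conormal is $-\nabla \tau = N_1$, and Lemma~\ref{lem} yields
$hQ[v](X,-\nabla\tau) = h\bigl[(\partial_z v)^2 + c\bigl(\tfrac12 g^{AB}\partial_A v\partial_B v + v^2\bigr)\bigr]$,
which involves only derivatives tangential to $\Sigma_0$. Crucially $\partial_\tau v$ is absent, matching the fact that $\varphi$ does not determine it on $\Sigma_0$, so this flux is controlled by $\|\varphi\|_{H^1(\Sigma_0)}^2$. On the timelike boundary $\Sigma_1$ the conormal is $\nabla z = -\tfrac12 z_0^2 V N_1 + N_2$, and Lemma~\ref{lem} gives
\begin{equation*}
hQ[v](X,\nabla z) = h\Bigl[-\tfrac12 z_0^2 V(\partial_z v)^2 + c(N_2 v)^2 + \bigl(1 - \tfrac{c z_0^2 V}{2}\bigr)\bigl(\tfrac12 g^{AB}\partial_A v\partial_B v + v^2\bigr)\Bigr].
\end{equation*}
Expanding $(N_2 v)^2 = \bigl(\partial_\tau \psi + g^{1A}\partial_A \psi + \tfrac12 z_0^2 V \partial_z v\bigr)^2$ on $\Sigma_1$ and absorbing cross terms by Cauchy--Schwarz, the net coefficient of $(\partial_z v)^2$ becomes $c\tfrac{z_0^4 V^2}{4} - \tfrac12 z_0^2 V$, positive for $c$ sufficiently large, and this produces the $\|\partial_z v\|_{L^2(\Sigma_1)}^2$ contribution in \eqref{eq-estimate-H1}; the residual quadratic form in $\partial_\tau \psi$, $\partial_A \psi$, $\psi$ is controlled by $\|\psi\|_{H^1(\Sigma_1)}^2$.

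In the interior, Lemma~\ref{lemma1} and the identity \eqref{divf} combined with $Lv = f$ give
\begin{equation*}
\int_{\Omega_\epsilon}\tfrac12 Q[v]_{\alpha\beta}{}^{(hX)}\pi^{\alpha\beta}\,dV = \int_{\partial \Omega_\epsilon} hP\cdot n\,dS - \int_{\Omega_\epsilon} h\bigl(f - a^i\partial_i v - \omega v\bigr)(Xv)\,dV + 2\int_{\Omega_\epsilon} h\, g(X,\nabla v)\, v\, dV.
\end{equation*}
With $h = e^{-\lambda \tau}$ and $\lambda$ taken large, the principal contribution to $^{(hX)}\pi$ comes from $-\lambda h X^0$ and, again by Lemma~\ref{lem}, makes the deformation quadratic form coercive in $|\nabla v|^2 + v^2$ modulo lower-order pieces that are absorbable via Cauchy--Schwarz and a Gronwall argument in $\tau$ applied to the slice energy $E(\tau) = \int_{\{\tau\}\times(\epsilon,z_0)\times S^2} Q[v](X,-\nabla\tau)$. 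The flux on the future null slice $\{\tau = T\}$ is moved to the left-hand side with favorable sign. For the flux at $\{z = \epsilon\}$, once we know $v \in H^1(\Omega)$, Fubini shows that $\int_{\{z=\epsilon\}}(|\partial_z v|^2 + |\partial_A v|^2 + v^2)$ is integrable in $\epsilon$, so we can select $\epsilon_n \to 0$ along which this flux tends to zero.

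The main obstacle is the simultaneous coercivity requirement: the multiplier $X$ must produce (i) a $\partial_\tau v$-free positive flux on $\Sigma_0$, (ii) a positive $(\partial_z v)^2$-flux on $\Sigma_1$, (iii) a vanishingly small flux on $\{z = \epsilon_n\}$, and (iv), together with $h$, a positive-definite interior quadratic form in $|\nabla v|^2 + v^2$. These four demands compete because the principal symbol $g^{11} = z^2 V$ degenerates at $z = 0$; the choice $X = N_1 + c N_2$ with $c$ large and the weight $h = e^{-\lambda \tau}$ with $\lambda$ large are engineered precisely to reconcile them, and the bookkeeping of the error terms arising from the variable coefficients of $g$, $a^i$, and $\omega$ in the deformation tensor constitutes the technical heart of the proof.
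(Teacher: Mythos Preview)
Your overall architecture (energy--momentum tensor, null-frame multiplier, weighted divergence identity on a truncated domain) matches the paper, but two choices break the argument.

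\textbf{The weight $h=e^{-\lambda\tau}$ does not give interior control of $\partial_\tau v$.} With $h=e^{-\lambda\tau}$ one has $\nabla h=\lambda h\,N_1$, so by Lemma~\ref{lem}
\[
Q[v](\nabla h,X)=\lambda h\,Q[v](N_1,\,N_1+cN_2)
=\lambda h\Bigl[(\partial_z v)^2+c\bigl(\tfrac12 g^{AB}\partial_A v\,\partial_B v+v^2\bigr)\Bigr],
\]
which contains \emph{no} $(N_2v)^2$ term. But your multiplier satisfies $Xv=-\partial_z v+cN_2v$, so the bulk terms $(f-a^i\partial_i v-\omega v)(Xv)$ and $g(X,\nabla v)v$ produce $(N_2v)^2$ contributions that nothing absorbs. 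Your Gronwall workaround fails for the same reason: the slice energy $E(\tau)=\int Q[v](X,-\nabla\tau)$ is exactly the expression above and again omits $(N_2v)^2$, so the differential inequality does not close. The paper fixes this by taking $h=e^{-p\tau+qz}$; then $\nabla h$ acquires a component $qN_2$, and Lemma~\ref{lem} yields a bulk term $q(N_2v)^2$, which after expansion gives $q(\partial_\tau v)^2$ and absorbs all lower-order errors once $q$ (and then $p=lq$) is large.

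\textbf{The treatment of the inner boundary $\{z=\epsilon\}$ is circular.} You invoke ``once we know $v\in H^1(\Omega)$'' to choose $\epsilon_n\to0$ with vanishing flux, but $v\in H^1(\Omega)$ is precisely the conclusion; the hypothesis is only $C^2(\Omega)\cap C^1(\Omega\cup\Sigma_0\cup\Sigma_1)$, with no regularity assumed up to $z=0$. On a flat slice $\{z=\epsilon\}$ the flux $Q[v](X,\nabla z)$ contains the bad-sign piece $-\tfrac12\epsilon^2 V(\partial_z v)^2$, and without a priori integrability you cannot dispose of it. The paper avoids this by replacing the flat inner boundary with the curve $z=z(\tau)=\epsilon/(1-\tfrac34\tau\epsilon)$; since $z'(\tau)\ge\tfrac12 z^2(\tau)V$, the combined flux $Q[v](\nabla z,Y)-z'(\tau)Q[v](\nabla\tau,Y)$ is nonnegative there and can simply be dropped, giving an estimate uniform in $\epsilon$.
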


\begin{proof} The proof consists of several steps.

{\it Step 1. Derivation of an integral identity.}
For a constant $\epsilon\in (0, z_0)$,
we consider
\begin{equation*}
\Omega_{\epsilon}=\{(\tau, z)| 0<\tau< T, z(\tau)< z< z_{0}\}\times S^{2},
\end{equation*}
where, $z(\tau)$, for $\tau\in[0,T]$, is a function to be chosen later, with the property $z(0)=\epsilon$.

For some constants $m, p, q>0$ to be chosen later, we consider a timelike vector field
\begin{equation}\label{eq-definition-Y}Y= N_{2}-m\partial_{z}=mN_1+N_2,\end{equation} and functions
\begin{equation}\label{eq-definition-h}w=-p\tau+qz,\quad h= e^{w}.\end{equation}
Set
$$P_{\alpha}=Q[v](\partial_{\alpha}, Y),\quad  P^{\alpha}=g^{\alpha\beta}P_{\beta}.$$
For the vector field $P$, an integration over $\Omega_{\epsilon}$ yields
\begin{equation}\label{sf}
\int_{\Omega_{\epsilon}}\mathrm{div}(hP)\  d\Omega=\int_{\partial\Omega_{\epsilon}}i_{hP}\ d(\partial\Omega),
\end{equation}
where $d\Omega$ and $d(\partial\Omega)$ are the volume element and the area element, respectively. Note
$d\Omega=\sqrt{\gamma}dzd\tau d\Sigma$, $\gamma$ is introduced in \eqref{eq-definition-gamma}, and
$d\Sigma$ is the area element on $S^2$.
A straightforward calculation yields
\begin{align*}
i_{hP}d\Omega&=-\sqrt{\gamma}hP^{0}dzd\Sigma=-\sqrt{\gamma}hQ[v](\nabla\tau, Y)dzd\Sigma
\quad\text{on }\tau=0\text{ or }T,\\
i_{hP}d\Omega&=\sqrt{\gamma}h[Q[v](\nabla z, Y)-z'(\tau)Q[v](\nabla\tau, Y)]d\tau d\Sigma
\quad\text{on }z=z(\tau),\\
i_{hP}d\Omega&=\sqrt{\gamma}hP^{1}dzd\Sigma=\sqrt{\gamma}hQ[v](\nabla z, Y)d\tau d\Sigma
\quad\text{on }z=z_0.
\end{align*}
By (\ref{divf}), \eqref{eq-operator-L}, and (\ref{sf}), we obtain
\begin{align}\label{keyf}\begin{split}
&\int_{\Omega_{\epsilon}}\{Q[v](\nabla h, Y)
+\frac{1}{2}hQ[v]_{\alpha\beta}\ ^{(Y)}\pi^{\alpha\beta}-2g(Y, \nabla v)hv\\
&\quad\qquad\qquad +(Lv-a^{\alpha}\partial_{\alpha}v-\omega v)h(Yv)\}d\Omega\\
&\qquad= \int_{\tau=T, z\geq z(T)}hQ[v](\nabla\tau, Y)\sqrt{\gamma}dzd\Sigma\\
&\qquad\quad -\int_{z=z(\tau)}h[Q[v](\nabla z, Y)-z'(\tau)Q[v](\nabla\tau, Y)]\sqrt{\gamma} d\tau d\Sigma\\
&\qquad\quad -\int_{\tau=0, z\geq\epsilon}hQ[v](\nabla\tau, Y)\sqrt{\gamma}dzd\Sigma
+\int_{z=z_{0}}hQ[v](\nabla z, Y)\sqrt{\gamma}d\tau d\Sigma.
\end{split}\end{align}
This is the key formula.

\smallskip
{\it Step 2. Analysis of domain integrals.} We now analyze the expression in the left-hand side of \eqref{keyf}.
We consider $p=lq$ for $p$ and $q$ in \eqref{eq-definition-h}, for some constant $l>0$.
We claim that,  for $l>0$ sufficiently large depending on
$m$, $\lambda$, $\Lambda$, and $|g^{1A}|_{L^\infty(\Omega_{T})}$,
\begin{equation}\label{eq-estimate-Q}
Q[v](\nabla w, Y)\geq \frac{1}{2}q\{(\partial_{\tau}v)^{2}+(\partial_{z}v)^{2}+g^{AB}\partial_{A}v\partial_{B}v+v^{2}\}.
\end{equation}
To prove \eqref{eq-estimate-Q}, we first note, by \eqref{eq-nabla-tau} and \eqref{eq-nabla-z},
$$\nabla w=-p\nabla \tau+q\nabla z=(p-\frac12qz^2V)N_1+qN_2.$$
By \eqref{eq-definition-N}, \eqref{eq-definition-Y} and Lemma \ref{lem}, we have
\begin{align}\label{1}\begin{split}
Q[v](\nabla w, Y)&= Q[v]([p-\frac{1}{2}z^{2}Vq]N_1+qN_{2}, mN_1+N_{2})\\
&= q(\partial_{\tau}v)^{2}+[mp+\frac{1}{2}qz^{2}V(\frac{1}{2}z^{2}V-m)](\partial_{z}v)^{2}\\
&\qquad +(mq+p-\frac{1}{2}qz^{2}V)[\frac12g^{AB}\partial_{A}v\partial_{B}v+v^{2}]\\
&\qquad +qz^{2}V\partial_{\tau}v\partial_{z}v+2qg^{1A}\partial_{A}v\partial_{\tau}v\\
&\qquad+qz^{2}Vg^{1A}\partial_{A}v\partial_{z}v+q(g^{1A}\partial_{A}v)^{2}.
\end{split}\end{align}
By the Cauchy inequality, we have
\begin{align}\label{2}\begin{split}
qz^{2}V\partial_{\tau}v\partial_{z}v&\leq q\{\frac{1}{4}(\partial_{\tau}v)^{2}+C(\partial_{z}v)^{2}\},\\
2qg^{1A}\partial_{A}v\partial_{\tau}v&\leq q\{\frac{1}{4}(\partial_{\tau}v)^{2}
+Cg^{AB}\partial_{A}v\partial_{B}v\},\\
qz^{2}Vg^{1A}\partial_{A}v\partial_{z}v&\leq Cq\{(\partial_{z}v)^{2}
+Cg^{AB}\partial_{A}v\partial_{B}v\},
\end{split}\end{align}
where $C$ is a positive constant depending on $\lambda$, $\Lambda$, and $|g^{1A}|_{L^\infty(\Omega_{T})}$.
By (\ref{1}) and (\ref{2}), we get
\begin{align*}
Q[v](\nabla w, Y)&\geq \frac{1}{2}q(\partial_{\tau}v)^{2}
+(mp-Cmq-Cq)(\partial_{z}v)^{2}\\
&\qquad +[\frac{1}{2}(mq+p)-Cq]g^{AB}\partial_{A}v\partial_{B}v
+(mq+p-Cq)v^{2}\\
&\geq  \frac{1}{2}q\{(\partial_{\tau}v)^{2}+(\partial_{z}v)^{2}+g^{AB}\partial_{A}v\partial_{B}v+v^{2}\},
\end{align*}
by choosing $p=lq$, for $l$ sufficiently large.
This finishes the proof \eqref{eq-estimate-Q}.

By (\ref{deftensor}) and the definition of $Q$, we have
\begin{equation*}
Q[v]_{\alpha\beta}\ ^{(Y)}\pi^{\alpha\beta}
=\sum\limits_{i,j=0}^{3}a_{ij}\partial_{i}v\partial_{j}v+\sum\limits_{i=0}^{3}b_{i}v\partial_{i}v+cv,
\end{equation*}
where $a_{ij}, b_{i}, c$ depend on $g$ and $\partial g$. By \eqref{eq-estimate-Q}
and choosing $q$
sufficiently large, depending on $|g^{ij}|_{C^{1}(\Omega_{T})}$,
$|a^{\alpha}|_{L^{\infty}(\Omega_{T})}$, and $|\omega|_{L^{\infty}(\Omega_{T})}$, we obtain
\begin{align}\label{eq-estimate-domain-integral}\begin{split}
& \int_{\Omega_{T,\epsilon}}\big\{Q[v](\nabla h, Y)
+\frac{1}{2}hQ[v]_{\alpha\beta}\ ^{(Y)}\pi^{\alpha\beta}-2g(Y, \nabla v)hv\\
 &\qquad\quad+(Lv-a^{\alpha}\nabla_{\alpha}v-\omega v)h(Yv)\big\}d\Omega\\
&\quad \geq \frac{1}{4}\int_{\Omega_{T,\epsilon}}h\{q[(\partial_{\tau}v)^{2}
+(\partial_{z}v)^{2}+g^{AB}\partial_{A}v\partial_{B}v+v^{2}]-(Lv)^{2}\}d\Omega.
\end{split}\end{align}

{\it Step 3. Analysis of boundary integrals.} We now analyze the expression in the right-hand side of \eqref{keyf}.
By (\ref{ef1}), (\ref{ef2}) and
\eqref{eq-definition-Y}, we have
\begin{equation}\label{3}
Q[v](\nabla\tau, Y)= -m(\partial_{z}v)^{2}-\frac{1}{2}g^{AB}\partial_{A}v\partial_{B}v-v^{2},
\end{equation}
and
\begin{equation}\label{3a}\begin{split}
Q[v](\nabla z, Y)&= (N_2v)^{2}+\frac{1}{2}mg^{AB}\partial_{A}v\partial_{B}v+mv^{2}\\
&\qquad -\frac12z^2V[m(\partial_{z}v)^{2}+\frac{1}{2}g^{AB}\partial_{A}v\partial_{B}v+v^{2}].
\end{split}\end{equation}
Note
\begin{equation}\label{eq-Q-T}Q[v](\nabla\tau, Y)\leq 0\quad\text{on }\tau=T.\end{equation}
Next, we consider those terms on $z=z(\tau)$. By \eqref{3} and \eqref{3a}, we have
\begin{align*}
&Q[v](\nabla z, Y)-z'(\tau)Q[v](\nabla\tau, Y)\\
&\qquad= (N_2v)^{2}+\frac{1}{2}mg^{AB}\partial_{A}v\partial_{B}v+mv^{2}\\
&\qquad\quad +(z'(\tau)-\frac12z^2(\tau)V)[m(\partial_{z}v)^{2}+\frac{1}{2}g^{AB}\partial_{A}v\partial_{B}v+v^{2}].
\end{align*}
We choose
\begin{equation}\label{eq-definition-z}
z(\tau)=\frac{\epsilon}{1-M\tau\epsilon}\quad\text{for } \tau\in[0, T],\end{equation}
where $M=3/4$. Then, by \eqref{eq-assumption-V},
$$z'(\tau)\geq \frac{1}{2}z^{2}(\tau)V.$$
Hence,
\begin{equation}\label{eq-estimate-z=z(tau)}Q[v](\nabla z, Y)-z'(\tau)Q[v](\nabla\tau, Y)\geq 0\quad\text{on }z=z(\tau).
\end{equation}
For terms on $z=z_{0}$, by expanding $(N_2v)^2$ in (\ref{3a}), we get
\begin{align*}
Q[v](\nabla z, Y)&=(\partial_{\tau}v+g^{1A}\partial_{A}v)^{2}
+ (m-\frac12z^2V)(\frac{1}{2}g^{AB}\partial_{A}v\partial_{B}v+v^{2})\\
&\qquad-\frac12z^2V(m-\frac{1}{2}z^{2}V)(\partial_{z}v)^{2}
+z^{2}V\partial_{\tau}v\partial_{z}v
+z^{2}Vg^{1A}\partial_{A}v\partial_{z}v.
\end{align*}
By the Cauchy inequality and
choosing $m$ large enough depending on $|g^{1A}|_{L^\infty(\Omega_{T})}$, we get
\begin{equation}\label{4}
Q[v](\nabla z, Y)\leq C\{(\partial_{\tau}v)^{2}+g^{AB}\partial_{A}v\partial_{B}v+v^{2}\}-(\partial_{z}v)^{2},
\end{equation}
where $C$ is a positive constant depending on $\lambda$, $\Lambda$, and $|g^{1A}|_{L^\infty(\Omega_{T})}$.

\smallskip
{\it Step 4. Completion of the proof.}
Take any $\epsilon>0$. By (\ref{keyf}), \eqref{eq-estimate-domain-integral},
\eqref{3}, \eqref{eq-Q-T}, and \eqref{eq-estimate-z=z(tau)},
we have
\begin{align}\label{eq-estimate-q}\begin{split}
&\frac{1}{4}\int_{\Omega_{\epsilon}}h\{q[(\partial_{\tau}v)^{2}+(\partial_{z}v)^{2}
+g^{AB}(\partial_{A}v)(\partial_{B}v)+v^{2}]-f^{2}\}d\Omega\\
&\qquad\leq -\int_{\tau=0, z\geq\epsilon}hQ(\nabla\tau, Y)\sqrt{\gamma}dzd\Sigma
+\int_{z=z_{0}}hQ(\nabla z, Y)\sqrt{\gamma}d\tau d\Sigma.
\end{split}\end{align}
Hence, by (\ref{3}), (\ref{4}), and $e^{-pT}\leq h\leq e^{qz_0}$, we obtain
\begin{align*}
& \int_{\Omega_{\epsilon}}\left[(\partial_{\tau}v)^{2}+(\partial_{z}v)^{2}
+g^{AB}\partial_{A}v\partial_{B}v+v^{2}\right]d\Omega+\int_{\Sigma_1}(\partial_{z}v)^{2}d\tau d\Sigma\\
&\qquad\leq  C\left\{\int_{\Omega_{\epsilon}}f^{2}d\Omega+\|\varphi\|_{H^{1}(\Sigma_{0})}^{2}
+\|\psi\|_{H^{1}(\Sigma_{1})}^{2}\right\},
\end{align*}
where $C$ does not depend on $\epsilon$. Taking $\epsilon\rightarrow0$, we finish the proof.
\end{proof}

\begin{corollary}\label{cor-radiation-L2}
Under the assumptions of Theorem \ref{maintheo}, $v_0=\lim_{z\to 0} v$ exists as an
$H^1$-function on $[0,T]\times S^2$ and
\begin{equation}\label{eq-estimate-radiation-L2}
\|v_0\|_{H^1([0,T]\times S^2)}\leq C\{\|f\|_{L^{2}(\Omega)}
+\|\varphi\|_{H^{1}(\Sigma_{0})}+\|\psi\|_{H^{1}(\Sigma_{1})}\}.
\end{equation}
\end{corollary}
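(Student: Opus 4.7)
My plan is to adapt the energy argument from the proof of Theorem~\ref{maintheo} by integrating on the subdomain $\Omega^\delta := \{(\tau,z):\, 0 < \tau < T,\ \delta < z < z_0\} \times S^2$ for small $\delta > 0$, replacing the curved inner boundary $\{z=z(\tau)\}$ by the flat slice $\{z=\delta\}$, and retaining the resulting boundary term to control the $H^1$-norm of $v$ restricted to $\{z=\delta\}$. Passing $\delta \to 0$ along a suitable subsequence will produce $v_0$ as the $H^1$-limit.

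First I would carry out the divergence identity \eqref{keyf} verbatim on $\Omega^\delta$ with the same multiplier $Y = mN_1 + N_2$ and weight $h = e^w$. The contributions on $\tau=0$, $\tau=T$ and $z=z_0$ are controlled exactly as in Steps~2--3 of Theorem~\ref{maintheo}, while the new slice contribution at $z=\delta$ reads $-\int_{z=\delta} h\, Q[v](\nabla z, Y)\sqrt{\gamma}\, d\tau\, d\Sigma$. Moving it to the left and combining with the interior estimate of Step~2 gives
\[
\int_{z=\delta} h\, Q[v](\nabla z, Y)\sqrt{\gamma}\, d\tau\, d\Sigma \le C D,
\]
where $D := \|f\|_{L^2(\Omega)}^2 + \|\varphi\|_{H^1(\Sigma_0)}^2 + \|\psi\|_{H^1(\Sigma_1)}^2$. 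Next I would expand $(N_2v)^2$ in \eqref{3a} via Cauchy's inequality and choose $m$ large enough to absorb the $(g^{1A}\partial_A v)^2$ cross term; for $\delta$ small this yields
\[
Q[v](\nabla z, Y)\big|_{z=\delta} \ge c\bigl\{(\partial_\tau v)^2 + g^{AB}\partial_A v\,\partial_B v + v^2\bigr\} - C\delta^2 (\partial_z v)^2,
\]
so
\[
\|v(\cdot,\delta,\cdot)\|_{H^1([0,T]\times S^2)}^2 \le C D + C\delta^2 F(\delta),\qquad F(\delta) := \int_{z=\delta}(\partial_z v)^2 \sqrt{\gamma}\, d\tau\, d\Sigma.
\]

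The main obstacle will be the nonnegative error $\delta^2 F(\delta)$, which need not vanish pointwise in $\delta$ (we cannot estimate $\partial_z v$ uniformly on slices from an $L^2(\Omega)$ bound alone). To handle it I will exploit the integrability $\int_0^{z_0} F(\delta)\, d\delta = \|\partial_z v\|_{L^2(\Omega)}^2 < \infty$ supplied by Theorem~\ref{maintheo}, which forces $\liminf_{\delta\to 0} \delta F(\delta) = 0$ (otherwise $F(\delta) \ge c/\delta$ near $0$ would contradict integrability). Picking a sequence $\delta_n \to 0$ with $\delta_n F(\delta_n) \to 0$ gives $\delta_n^2 F(\delta_n) \to 0$ and the uniform bound $\|v(\cdot,\delta_n,\cdot)\|_{H^1([0,T]\times S^2)} \le C\sqrt{D}$.

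To conclude, I would use Cauchy--Schwarz applied to $v(\tau,\delta_1,x) - v(\tau,\delta_2,x) = \int_{\delta_1}^{\delta_2}\partial_z v\, dz$ to get
\[
\|v(\cdot,\delta_1,\cdot) - v(\cdot,\delta_2,\cdot)\|_{L^2([0,T]\times S^2)}^2 \le |\delta_2-\delta_1|\, \|\partial_z v\|_{L^2(\Omega)}^2,
\]
so $v(\cdot,\delta,\cdot)$ is $L^2$-Cauchy as $\delta \to 0$ with some limit $v_0$. Passing to a weakly $H^1$-convergent subsequence of $\{v(\cdot,\delta_n,\cdot)\}$, the $L^2$ convergence forces the weak $H^1$ limit to coincide with $v_0$, and weak lower semicontinuity of the $H^1$-norm then yields \eqref{eq-estimate-radiation-L2}.
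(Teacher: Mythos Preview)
Your argument is correct, but it takes a more laborious route than the paper's. The paper does not switch to a flat slice $\{z=\delta\}$; it simply revisits the curved inner boundary $\{z=z(\tau)\}$ already used in the proof of Theorem~\ref{maintheo}. Recall that $z(\tau)$ was chosen precisely so that $z'(\tau)\ge \tfrac12 z^2(\tau)V$, which makes the coefficient of the bracket $[m(\partial_z v)^2+\tfrac12 g^{AB}\partial_Av\partial_Bv+v^2]$ in
\[
Q[v](\nabla z,Y)-z'(\tau)Q[v](\nabla\tau,Y)
\]
nonnegative. Hence on $z=z(\tau)$ one has the clean lower bound
\[
Q[v](\nabla z,Y)-z'(\tau)Q[v](\nabla\tau,Y)\ \ge\ (N_2 v)^2+\tfrac12 m\,g^{AB}\partial_Av\partial_Bv+mv^2,
\]
with \emph{no} $(\partial_z v)^2$ error at all. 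Keeping (rather than discarding) this boundary term in \eqref{keyf} gives a uniform-in-$\epsilon$ bound on the right-hand side, and since $z(\tau)\to 0$ and $N_2 v\to\partial_\tau v$ as $\epsilon\to 0$, the $H^1$ control of $v_0$ follows directly.

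Your flat-slice choice forfeits this sign, forcing you to carry the error $C\delta^2 F(\delta)$ and then invoke integrability of $F$, a subsequence with $\delta_n F(\delta_n)\to 0$, and weak $H^1$ compactness. This works, and your $L^2$-Cauchy argument for existence of the limit is a nice addition (the paper is terse on that point). But the paper's observation is that the awkward error term is an artifact of abandoning the curved foliation; staying with $z=z(\tau)$ makes the whole subsequence/compactness detour unnecessary.
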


\begin{proof} In Step 3 above, we actually have, on $z=z(\tau)$,
\begin{align*}
Q[v](\nabla z, Y)-z'(\tau)Q[v](\nabla\tau, Y)\ge (N_2v)^{2}+\frac{1}{2}mg^{AB}\partial_{A}v\partial_{B}v+mv^{2}.
\end{align*}
Note that $N_2v\to \partial_\tau v$ as $z\to 0$.
\end{proof}

We point out that $v_0$ is the radiation field. (Refer to \cite{Friedlander1967}.)

We have another version of the $H^1$-estimate.

\begin{theorem}\label{maintheo-another} Suppose $f\in L^{2}(\Omega)$,
$\varphi\in H^{1}(\Sigma_{0})$, and $\psi\in H^{1}(\Sigma_{1})$. Let $v$ be a
$C^2(\Omega)\cap C^1(\Omega\cup\Sigma_0\cup\Sigma_1)$-solution of
\eqref{eq}.
Then,
\begin{equation}\label{eq-estimate-H1-alternative}
\|v\|_{H^{1}(\Omega)}+\|\partial_{z}v\|_{L^{2}(\Sigma_1)}\leq C\{\|f\|_{L^{2}(\Omega)}
+\|\varphi\|_{H^{1}(\Sigma_{0})}+\|N_{2}v\|_{L^{2}(\Sigma_{1})}\},
\end{equation}
where $C$ is a positive constant depending only on $z_{0}$, $T$, $\lambda$, $\Lambda$,
$|g^{ij}|_{C^{1}(\Omega)}$, $ |a^{\alpha}|_{L^{\infty}(\Omega)}$, and
$|\omega|_{L^{\infty}(\Omega)}$.
\end{theorem}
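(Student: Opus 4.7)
The plan is to reuse the entire framework of the proof of Theorem \ref{maintheo}, with one single adjustment: the parameter $m$ in the timelike multiplier $Y=mN_1+N_2$ will be taken \emph{small} rather than large. This change lets one avoid the coarse bound \eqref{4}, which was precisely the step that forced the appearance of $\|\psi\|_{H^1(\Sigma_1)}$. Concretely, I would fix $m\in(0,z_0^2/4]$ once and for all, and then choose $l$ (with $p=lq$) large enough to restore the domain estimate. The positivity of the $(\partial_zv)^2$ coefficient in Step 2 requires only $m(l-C)>C$, so any fixed positive $m$ is admissible provided $l$ is then taken sufficiently large; consequently \eqref{eq-estimate-domain-integral} continues to hold. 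Likewise the integrals on $\tau=0$, $\tau=T$, and $z=z(\tau)$ in Step 3 are handled exactly as before, yielding the $\|\varphi\|_{H^1(\Sigma_0)}^2$ contribution together with nonpositive contributions from \eqref{eq-Q-T} and \eqref{eq-estimate-z=z(tau)}.

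The only substantive modification occurs on $\Sigma_1=\{z=z_0\}$, where I would retain the exact identity \eqref{3a} rather than dominating it by \eqref{4}. On $z=z_0$ this gives
\begin{equation*}
Q[v](\nabla z,Y)=(N_2v)^2+\bigl(\tfrac{1}{2}m-\tfrac{1}{4}z_0^2V\bigr)g^{AB}\partial_Av\partial_Bv+\bigl(m-\tfrac{1}{2}z_0^2V\bigr)v^2-\tfrac{1}{2}z_0^2Vm(\partial_zv)^2.
\end{equation*}
Because of \eqref{eq-assumption-V} and the smallness $m\le z_0^2/4$, the coefficients of $g^{AB}\partial_Av\partial_Bv$ and $v^2$ are non-positive, while the coefficient of $(\partial_zv)^2$ is strictly negative of order $z_0^2m$. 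Hence on $\Sigma_1$ there exists $c=c(z_0,\lambda,m)>0$ with
\begin{equation*}
Q[v](\nabla z,Y)\le (N_2v)^2-c(\partial_zv)^2.
\end{equation*}

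Inserting this bound into the key identity \eqref{keyf} and arguing exactly as in Step 4 of Theorem \ref{maintheo}, the $-c(\partial_zv)^2$ piece transfers to the left-hand side to supply the $\|\partial_zv\|_{L^2(\Sigma_1)}^2$ term of \eqref{eq-estimate-H1-alternative}, while the $(N_2v)^2$ contribution on the right produces the desired $\|N_2v\|_{L^2(\Sigma_1)}^2$ term. Letting $\epsilon\to 0$ completes the argument. The main subtlety, and in my view the only real obstacle, is the joint compatibility of the choice of $m$: it must be small enough to make the angular and $v^2$ coefficients on $\Sigma_1$ non-positive in \eqref{3a}, yet strictly positive for Step 2 to yield a positive $(\partial_zv)^2$ contribution in the domain. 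The observation that fixing $m$ first and then taking $l$ large reconciles both requirements is what makes the argument go through.
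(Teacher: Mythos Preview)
Your proposal is correct and follows essentially the same route as the paper: the paper's proof also reuses the framework of Theorem~\ref{maintheo} verbatim and modifies only the treatment of $Q[v](\nabla z,Y)$ on $\Sigma_1$, taking $m=\tfrac14 z_0^2$ (the endpoint of your range) so that the angular and $v^2$ coefficients in \eqref{3a} become nonpositive while the $(\partial_z v)^2$ coefficient stays strictly negative. Your remark that the domain estimate survives because $l$ can be taken large after fixing $m$ is exactly the mechanism the paper relies on implicitly.
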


\begin{proof}
We only need to analyze $Q[v](\nabla z, Y)$ on $z=z_0$ in Step 3 differently.
By (\ref{3a}), we rewrite $Q[v](\nabla z, Y)$ as
\begin{equation*}%\label{newboundaryf}
Q[v](\nabla z, Y)=(N_{2}v)^{2}+\frac{1}{2}(m-\frac{1}{2}z^{2}V)[g^{AB}(\partial_{A}v)(\partial_{B}v)+2v^{2}]
-\frac{1}{2}z^{2}Vm(\partial_{z}v)^{2}.
\end{equation*}
Taking $m=\frac{1}{4}z_{0}^{2}$, we have
\begin{equation}\label{a8}
Q[v](\nabla z, Y)\leq (N_{2}v)^{2}-\frac{1}{2}z^{2}Vm(\partial_{z}v)^{2}.
\end{equation}
This implies \eqref{eq-estimate-H1-alternative}. We note that we take $m$ small here instead of
large in the proof of Theorem \ref{maintheo}.
%({\color{red} This holds for $m$ small instead for $m$ large.})
\end{proof}

%\begin{remark}\label{remark-assumption-z0}
Theorem \ref{maintheo} and Theorem \ref{maintheo-another} hold under a weaker assumption
$v\in H^2(\Omega\cap \{z>\varepsilon\})$, for any $\varepsilon>0$.
It is important {\it not} to assume that solutions are continuous up to $z=0$
or have $L^2$-traces on $z=0$. An important application of these theorems
and the corresponding higher order estimates is to allow the passage from analytic solutions to
sufficiently smooth solutions.
Analytic solutions will be proved to exist only for $z>0$ and can be extended up to $z=0$ {\it after} they satisfy
the energy estimates. See also Remark \ref{remark-Regularity}.
%\end{remark}

%\begin{remark}\label{remark-estimate-q}
We now rewrite \eqref{eq-estimate-H1} and
\eqref{eq-estimate-H1-alternative}. Let  $h$ the positive function defined in  \eqref{eq-definition-h}.
Define
$$\|u\|_{H^p_h(\Omega)}=\sum_{i=0}^p\|h\nabla^iu\|_{L^2(\Omega)},$$
and similarly $\|u\|_{H^p_h(\Sigma_0)}$ and $\|u\|_{H^p_h(\Sigma_1)}$.
Here, we simply insert a factor of $h^2$ in calculating the $L^2$-norms.
In the proof of Theorem \ref{maintheo}, by multiplying the equation by $h^2$ instead of $h$ and
renaming $q$, we obtain
\begin{equation}\label{eq-estimate-H1-h}
q\|v\|_{H^{1}_h(\Omega)}+\|\partial_{z}v\|_{L^{2}_h(\Sigma_1)}\leq C\{\|f\|_{L^{2}_h(\Omega)}
+\|\varphi\|_{H^{1}_h(\Sigma_{0})}+\|\psi\|_{H^{1}_h(\Sigma_{1})}\},
\end{equation}
where $C$ is a constant independent of $q$. Similarly, we have
\begin{equation}\label{eq-estimate-H1-alternative-h}
q\|v\|_{H^{1}_h(\Omega)}+\|\partial_{z}v\|_{L^{2}_h(\Sigma_1)}\leq C\{\|f\|_{L^{2}_h(\Omega)}
+\|\varphi\|_{H^{1}_h(\Sigma_{0})}+\|N_{2}v\|_{L^{2}_h(\Sigma_{1})}\}.
\end{equation}
In the next section, when we apply \eqref{eq-estimate-H1-h} and \eqref{eq-estimate-H1-alternative-h},
we have freedom to choose $q$ large.
%\end{remark}

\section{Higher Order Estimates}\label{sec-Hk-Estimates}

In this section, we continue to study the timelike/null problem \eqref{eq}
and derive $H^k$-estimates, for $k\ge 2$.
To this end, we need to differentiate the equation \eqref{eq}.
The following commutation formula is helpful.

\begin{lemma}\label{commulemma}
Let $X$ be a field with a deformation tensor $\pi=^{(X)}\pi$. Then,
\begin{equation*}
[\Box,X]\phi=\pi^{\alpha\beta}\nabla^{2}\phi_{\alpha\beta}
+\nabla_{\alpha}\pi^{\alpha\beta}\partial_{\beta}\phi
-\frac{1}{2}\partial^{\alpha}(tr\pi)\partial_{\alpha}\phi.
\end{equation*}
\end{lemma}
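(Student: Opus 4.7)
The plan is to compute $\Box(X\phi)$ and $X(\Box\phi)$ separately in covariant form and to isolate the stated right-hand side through two ingredients: the symmetry of the Hessian $\nabla^2\phi$ on scalars, and the Ricci identity applied twice (once to pass $\Box$ through $\nabla_\beta$, once to compute the divergence of $\pi$). The key observation to anticipate is that the two Ricci curvature contributions that appear along the way will cancel.

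First, I write $\Box = \nabla^\alpha\nabla_\alpha$ and apply the Leibniz rule to $\Box(X^\beta \nabla_\beta \phi)$ to get
\[
\Box(X\phi) = (\Box X^\beta)\nabla_\beta \phi + 2\nabla^\alpha X^\beta \nabla_\alpha \nabla_\beta \phi + X^\beta \Box(\nabla_\beta \phi).
\]
Since $\nabla_\alpha \nabla_\beta \phi$ is symmetric for scalar $\phi$, only the symmetric part of $\nabla^\alpha X^\beta$ survives the contraction, and the middle term becomes exactly $\pi^{\alpha\beta}\nabla^2\phi_{\alpha\beta}$, matching the first term of the claim. For the last term I will use the commutator identity
\[
\Box(\nabla_\beta \phi) - \nabla_\beta \Box\phi = R_\beta{}^\alpha \nabla_\alpha \phi,
\]
obtained from the Ricci identity applied to the $1$-form $\nabla\phi$ together with symmetry of the Hessian on scalars. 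Subtracting $X(\Box\phi) = X^\beta \nabla_\beta \Box\phi$ from $\Box(X\phi)$ then yields
\[
[\Box,X]\phi = \pi^{\alpha\beta}\nabla^2\phi_{\alpha\beta} + (\Box X^\beta)\nabla_\beta \phi + R_\beta{}^\alpha X^\beta \nabla_\alpha \phi.
\]

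Second, I will rewrite $\Box X^\beta$ in terms of $\pi$ so that the remaining two terms produce $\nabla_\alpha \pi^{\alpha\beta}\partial_\beta\phi - \tfrac{1}{2}\partial^\alpha(\mathrm{tr}\,\pi)\partial_\alpha\phi$. Taking the divergence of $\pi^{\alpha\beta} = \nabla^\alpha X^\beta + \nabla^\beta X^\alpha$ and using the Ricci identity to commute $\nabla_\alpha$ past $\nabla^\beta$ on the vector $X^\alpha$ gives
\[
\nabla_\alpha \pi^{\alpha\beta} = \Box X^\beta + \nabla^\beta(\nabla_\alpha X^\alpha) + R^\beta{}_\mu X^\mu.
\]
Since $\mathrm{tr}\,\pi = 2\nabla_\alpha X^\alpha$, this rearranges to $\Box X^\beta = \nabla_\alpha \pi^{\alpha\beta} - \tfrac{1}{2}\partial^\beta(\mathrm{tr}\,\pi) - R^\beta{}_\mu X^\mu$. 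Substituting into the previous display, the two Ricci contributions $R^\beta{}_\mu X^\mu \nabla_\beta\phi$ and $R_\beta{}^\alpha X^\beta \nabla_\alpha \phi$ coincide by symmetry of the Ricci tensor and opposite signs, so they cancel, leaving exactly the stated identity.

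The principal obstacle is sign bookkeeping: the Ricci identity is used once on a covector and once on a vector, and the argument only closes if these two curvature terms appear with opposite sign. No analytic estimates are required, and no further geometric identities beyond Leibniz and the Ricci identity enter.
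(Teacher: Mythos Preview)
Your argument is correct: the Leibniz expansion of $\Box(X^\beta\nabla_\beta\phi)$, the symmetrization that turns $2\nabla^\alpha X^\beta\,\nabla_\alpha\nabla_\beta\phi$ into $\pi^{\alpha\beta}\nabla^2\phi_{\alpha\beta}$, the commutator $\Box\nabla_\beta\phi-\nabla_\beta\Box\phi=R_\beta{}^\alpha\nabla_\alpha\phi$, and the rewriting of $\Box X^\beta$ via $\nabla_\alpha\pi^{\alpha\beta}$ all check out, and the two Ricci terms do indeed cancel by symmetry of the Ricci tensor. The paper itself does not supply a proof of this lemma but simply refers the reader to pages 59--61 of Alinhac's monograph; your covariant computation is exactly the standard derivation one finds there, so there is no difference in approach to report.
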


Refer to P59-61  \cite{Alinhac} for a proof.

\smallskip

In the following, we denote the multi-indices $\alpha, \beta\in \mathbb Z^4_+$
by $\alpha=(\alpha_0, \alpha_1, \alpha_2, \alpha_3)$,
$\beta=(\beta_0, \beta_1, \beta_2, \beta_3)$, etc.

For a fixed multi-index $\alpha$, by applying appropriate vector fields to the equation
(\ref{eq}) successively and using
Lemma \ref{commulemma}, we can derive an equation for $\partial^\alpha v$ given by
\begin{align}\label{eq-equation-general}
\Box_{g}(\partial^\alpha v)=f_\alpha,
\end{align}
where $f_\alpha$ is a linear combination of $\partial^\alpha f$ and certain
derivatives of  $v$ up to order $|\alpha|+1$. A precise form will be given later.
%The equation \eqref{eq-equation-general} has the same structure as the equation in \eqref{eq}.
We attempt to apply the derived $H^1$-estimates
to this equation and get an estimate of the $H^1$-norm of $\partial^\alpha v$.
There are two issues we need to resolve. First, $f_\alpha$ contains derivatives of $v$ of order $|\alpha|+1$,
not all of which can be written as $\partial_i\partial^\alpha v$, for some $i=0, 1, 2, 3$. If we
simply apply the derived $H^1$-estimates, there are derivatives of order $|\alpha|+1$ in the right-hand side,
which are not yet controlled. Second, we need to determine
the initial values of $\partial^\alpha v$ on $\Sigma_0$ and
the boundary values on $\Sigma_1$.

To resolve the first issue, we fix an integer $p$ and look for a set $I$ of multi-indices $\alpha$ with $|\alpha|=p$ such that,
for each $\alpha\in I$, derivatives of order $p+1$ in the right-hand side of the equation \eqref{eq-equation-general}
can be written as $\partial_i\partial^\beta v$, for some $\beta\in I$ and  $i=0, 1, 2, 3$.
In a sense, the system of equations \eqref{eq-equation-general} for $\alpha\in I$ forms a {\it closed} system.
For the Minkowski metric, the system for derivatives with respect to only spherical coordinates forms a closed system,
and the system for derivatives with respect to spherical coordinates and $z$ also forms a closed system.
In this case, we can estimate derivatives with respect to only spherical coordinates first,
then  derivatives with respect to spherical coordinates and $z$, and last arbitrary derivatives. Refer to \cite{Balean1997}.
For the general metric, the system for derivatives with respect to only spherical coordinates
does not necessarily form a closed system. However, the system for derivatives with respect
to spherical coordinates and $z$ forms a closed system. We will start with this system.

Now, we turn to the second issue concerning initial values and boundary values.
We can always estimate  $\|\partial^\alpha v\|_{H^1(\Sigma_0)}$ in terms of known quantities.
However, such an estimate exhibits a loss of differentiation if $\partial^\alpha v$
involves $\tau$-derivatives.
%See Step 2 in the proof of Theorem \ref{theorem-partial-estia}.
Boundary values exhibit different features.
We cannot always estimate
$\|\partial^\alpha v\|_{H^1(\Sigma_1)}$ or
$\|N_2(\partial^\alpha v)\|_{L^2(\Sigma_1)}$ in terms of known quantities.
It turns out that, for a closed system for a set $I$ of multi-indices and each $\alpha\in I$, we can estimate
$\|\partial^\alpha v\|_{H^1(\Sigma_1)}$ or
$\|N_2(\partial^\alpha v)\|_{L^2(\Sigma_1)}$ in terms of some quantities we intend to estimate.
In other words, quantities we intend to control in the left-hand side will appear also in the right-hand side.
We need to make a balance and control these quantities.

We first estimate derivatives of $v$ with respect to $\tau$ at most once.

\begin{theorem}\label{theorem-partial-estia}
For some integer $k\ge 2$, suppose $f\in H^{k-1}(\Omega)$,
$\varphi\in H^{k}(\Sigma_{0})$, and $\psi\in H^{k}(\Sigma_{1})$. Let $v$ be a
$C^{k+1}(\Omega)\cap C^{k}(\Omega\cup\Sigma_0\cup\Sigma_1)$-solution of
\eqref{eq}.
Then, for any $|\alpha|\le k$ with $\alpha_0\le 1$,
\begin{align*}
\|\partial^{\alpha}v\|_{L^2(\Omega)}
\leq C\{\|f\|_{H^{k-1}(\Omega)}+\|\varphi\|_{H^{k}(\Sigma_{0})}+\|\psi\|_{H^{k}(\Sigma_{1})}\},
\end{align*}
where
$C$ is a positive constant depending only on $k$, $T$, $\lambda$, $\Lambda$,
$|g^{ij}|_{C^{k}(\Omega)}$, $ |a^{\alpha}|_{C^{k-1}(\Omega)}$, and
$|\omega|_{C^{k-1}(\Omega)}$.
\end{theorem}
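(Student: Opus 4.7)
The plan is to reduce the statement to weighted $H^1$-estimates applied to $w_\gamma:=\partial^\gamma v$ for every multi-index $\gamma$ with $|\gamma|\le k-1$ and $\gamma_0=0$. Any $\partial^\alpha v$ with $|\alpha|\le k$ and $\alpha_0\le 1$ is of the form $\partial_i w_\gamma$ for some such $\gamma$ and some $i\in\{0,1,2,3\}$, so controlling $\sum_\gamma\|w_\gamma\|_{H^1(\Omega)}$ yields the theorem. I will proceed by induction on $k$, with the base case $k=1$ being Theorem \ref{maintheo}.

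The first key step is to establish that $\{w_\gamma : |\gamma|\le k-1,\ \gamma_0=0\}$ forms a \emph{closed system} under the wave equation. Applying $\partial^\gamma$ to $Lv=f$ and iterating Lemma \ref{commulemma} produces
\[\Box_{\bar g}w_\gamma + \tilde a_\gamma^i\partial_i w_\gamma + \tilde\omega_\gamma w_\gamma = F_\gamma,\]
where $F_\gamma$ is a linear combination of $\partial^{\gamma'}f$ with $|\gamma'|\le|\gamma|$ and of derivatives of $v$ of order at most $|\gamma|+1$. The critical structural observation is that, by \eqref{eq-expression-box}, the principal part of $\Box_{\bar g}$ contains neither $\partial_\tau^2$ nor $\partial_\tau\partial_A$, since $\bar g^{00}=\bar g^{0A}=0$. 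Hence every commutator $[\Box_{\bar g},\partial_i]$ with $i\in\{1,2,3\}$ generates only second-order terms carrying at most one $\partial_\tau$, and since $\gamma_0=0$, iteration preserves this property. It follows that every order-$(|\gamma|+1)$ derivative of $v$ appearing in $F_\gamma$ is of the form $\partial^\alpha v$ with $\alpha_0\le 1$, which can be rewritten as $\partial_i w_{\gamma'}$ for some $\gamma'$ in the same system.

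The second step is to produce boundary data for each $w_\gamma$. On $\Sigma_0$, since $\partial_z,\partial_A$ are tangential, $w_\gamma|_{\Sigma_0}=\partial^\gamma\varphi$ and $\|w_\gamma|_{\Sigma_0}\|_{H^1(\Sigma_0)}\le\|\varphi\|_{H^k(\Sigma_0)}$. On $\Sigma_1$, if $\gamma_1=0$ then $w_\gamma|_{\Sigma_1}=\partial^\gamma\psi$, bounded by $\|\psi\|_{H^k(\Sigma_1)}$. If $\gamma_1\ge 1$, I write $w_\gamma|_{\Sigma_1}=(\partial_z w_{\gamma-e_1})|_{\Sigma_1}$, whose $L^2(\Sigma_1)$ norm already sits on the LHS of \eqref{eq-estimate-H1-h} applied to $w_{\gamma-e_1}$. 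Tangential $\partial_A$-derivatives are handled similarly. For tangential $\partial_\tau$-derivatives on $\Sigma_1$, I invoke the equation on $\Sigma_1$, which is non-characteristic because $\bar g^{11}=z_0^2V>0$, to replace $\partial_\tau\partial_z v$ by $\partial_z^2, \partial_{zA}, \partial_{AB}$ derivatives and lower-order terms; applying $\partial^{\gamma-e_1}$ then keeps everything within the closed system.

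Applying \eqref{eq-estimate-H1-h} to every $w_\gamma$ in the closed system and summing yields
\[q\sum_\gamma\|w_\gamma\|_{H^1_h(\Omega)}+\sum_\gamma\|\partial_z w_\gamma\|_{L^2_h(\Sigma_1)}\le C\bigl(\|f\|_{H^{k-1}(\Omega)}+\|\varphi\|_{H^k(\Sigma_0)}+\|\psi\|_{H^k(\Sigma_1)}\bigr)+C\sum_\gamma\|F_\gamma\|_{L^2_h(\Omega)}+R,\]
where the top-order contributions to $\|F_\gamma\|_{L^2_h(\Omega)}$ and the boundary remainder $R$ reappear in $\sum_\gamma\|w_\gamma\|_{H^1_h(\Omega)}+\sum_\gamma\|\partial_z w_\gamma\|_{L^2_h(\Sigma_1)}$ on the LHS \emph{without} the factor $q$. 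Choosing $q$ sufficiently large (depending on $k$, $g$, and the coefficients) absorbs these top-order terms, while all lower-order derivatives of $v$ in $F_\gamma$ (all of which still carry at most one $\partial_\tau$, by the same structural observation) are controlled by the inductive hypothesis. The main obstacle is orchestrating this absorption simultaneously on $\Omega$ and on $\Sigma_1$: one must verify that elimination of $\partial_\tau\partial_z$ on $\Sigma_1$ never introduces derivatives outside the closed system, and that the absorbed coefficients stay strictly below $q$ uniformly in $\gamma$.
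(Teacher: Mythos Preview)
Your overall strategy---differentiating only in $z$ and the spherical directions to form a closed system, applying weighted $H^1$-estimates, and absorbing top-order domain terms by choosing $q$ large---matches the paper's proof. The closed-system observation in your first step is exactly \eqref{eq-expression-f-beta-0}, and the absorption of the $\|F_\gamma\|_{L^2_h(\Omega)}$ contributions by the $q$-weighted domain term on the left is precisely Step~4 of the paper.

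The gap is on $\Sigma_1$. You apply only \eqref{eq-estimate-H1-h} to every $w_\gamma$, so the right-hand side carries $\|w_\gamma\|_{H^1_h(\Sigma_1)}$, in particular $\|\partial_\tau w_\gamma\|_{L^2_h(\Sigma_1)}$. When $\gamma_1\ge 1$ you invoke the equation to rewrite $\partial_\tau\partial_z$; but the principal part $2\partial_{\tau z}+z^2V\partial_{zz}+2g^{1A}\partial_{zA}+g^{AB}\partial_{AB}$ forces
\[
2\partial_\tau w_\gamma = -z_0^2V\,\partial_z w_\gamma - 2g^{1A}\partial_z w_{\gamma-e_1+e_A} - g^{AB}\partial_{AB}\partial^{\gamma-e_1}v + \cdots
\quad\text{on }\Sigma_1,
\]
so the right-hand side of your summed inequality contains $C\sum_\gamma\|\partial_z w_\gamma\|_{L^2_h(\Sigma_1)}$ with a constant $C$ that depends on $z_0^2V$ and the number of multi-indices feeding into each $\gamma$. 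On the left this same sum appears with coefficient $1$, \emph{not} $q$ (cf.\ \eqref{eq-estimate-H1-h}). Taking $q$ large is therefore irrelevant for the boundary, and there is no mechanism forcing $C<1$. Your last paragraph names this as ``the main obstacle'' but does not resolve it.

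The paper breaks this loop by using a second $H^1$-estimate, \eqref{eq-estimate-H1-alternative-h}, for every $\gamma$ with $\gamma_1\ge 1$: its right-hand side asks only for $\|N_2 w_\gamma\|_{L^2_h(\Sigma_1)}$ rather than the full $H^1(\Sigma_1)$-norm. The point is the algebraic identity $\Box_g = 2N_2\partial_z + g^{AB}\partial_{AB}+\text{l.o.t.}$, so applying the equation for $\partial^{\gamma-e_1}v$ gives
\[
2N_2 w_\gamma = f_{\gamma-e_1} - g^{AB}\partial_{AB}\partial^{\gamma-e_1}v + \text{l.o.t.},
\]
and the only top-order term on the right now has $z$-order $\gamma_1-1$ (see \eqref{eq-equation-N-beta-p-1-0}). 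This strict decrease in $\gamma_1$ replaces your failed absorption by a finite recursion in $m=\gamma_1$ from $1$ up to $p$ (Step~3 of the paper), after which the boundary terms are controlled by $M$ outright and only the domain absorption---which your $q$ does handle---remains.
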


\begin{proof}
For any $p=1, \cdots, k-1$, we  prove
\begin{align}\label{eq-estimate-iteration-0}\begin{split}
&\sum_{\substack{|\beta|=p+1\\ \beta_0\le 1}}\|\partial^\beta v\|_{L^2(\Omega)}
+\sum_{\substack{|\beta|=p+1\\ \beta_0\le 1}}\|\partial^\beta v\|_{L^2(\Sigma_1)}\\
&\quad\leq  C\big\{\sum_{\substack{|\beta|\le p\\ \beta_0\le 1}}\|\partial^\beta v\|_{L^2(\Omega)}
+\sum_{\substack{|\beta|\le p\\ \beta_0\le 1}}\|\partial^\beta v\|_{L^2(\Sigma_1)}\\
&\quad\qquad+\|f\|_{H^{p}(\Omega)}+\|\varphi\|_{H^{p+1}(\Sigma_0)}
+\|\psi\|_{H^{p+1}(\Sigma_{1})}\big\}.
\end{split}\end{align}
Then, the desired result follows from a simple iteration and Theorem \ref{maintheo}.
The proof of (\ref{eq-estimate-iteration-0}) consists of several steps.

{\it Step 1. Equations of derivatives of $v$.}
Take a multi-index $\alpha=(\alpha_0, \alpha_1, \alpha_2, \alpha_3)$ with $|\alpha|=p$ and $\alpha_0=0$.
By \eqref{eq-expression-box}, \eqref{eq-operator-L} and Lemma \ref{commulemma},
we have
\begin{align}\label{eq-equation-beta-0}
\Box_{g}(\partial^\alpha v)=f_\alpha,
\end{align}
where
\begin{align*}
f_{\alpha}= \partial^\alpha f
+\sum_{|\beta|\le p+1}c_{\alpha, \beta}\partial^\beta v.
\end{align*}
In \eqref{eq-equation-beta-0}, all derivatives of $v$ of order $p+2$ are in $\Box_{g}(\partial^\alpha v)$,
and hence the highest order of derivatives of $v$ in $f_\alpha$ is $p+1$.
In each term involving $v$ in $f_\alpha$, the derivative with respect to $\tau$ is at most 1. Hence, we can write
\begin{align}\label{eq-expression-f-beta-0}
f_{\alpha}=\partial^\alpha f
+\sum_{\substack{|\beta|\le p+1 \\ \beta_0\le 1}}c_{\alpha,\beta}\partial^\beta v.
\end{align}

{\it Step 2. Estimates of $\partial^\beta v$, with $|\beta|=p+1$ and $\beta_0\le 1$.}
First, consider a multi-index $\alpha$ with $|\alpha|=p$ and $\alpha_0=\alpha_1=0$.
Applying \eqref{eq-estimate-H1-h} to \eqref{eq-equation-beta-0}, we have
\begin{equation*}
q\|\partial^\alpha v\|_{H^{1}_h(\Omega)}+\|\partial_{z}\partial^\alpha v\|_{L^{2}_h(\Sigma_1)}
\leq C\{\|f_\alpha\|_{L^{2}_h(\Omega)}
+\|\partial^\alpha v\|_{H^{1}_h(\Sigma_{0})}+\|\partial^\alpha v\|_{H^{1}_h(\Sigma_{1})}\}.
\end{equation*}
For the domain integral in the left-hand side, we keep only derivatives of order $p+1$.
For the right-hand side, we note that
$\partial^\beta v=\partial^\beta \varphi$ on $\Sigma_0$ if $\beta_0=0$ and
$\partial^\beta v=\partial^\beta \psi$ on $\Sigma_1$ if $\beta_1=0$.
Hence,
\begin{align}\label{eq-estimate-p-0-0}
q\sum_{\substack{|\alpha|= p \\ \alpha_0=\alpha_1=0}}\|\partial\partial^\alpha v\|_{L^2_h(\Omega)}
+\sum_{\substack{|\alpha|= p \\ \alpha_0=\alpha_1= 0}}\|\partial_z\partial^\alpha v\|_{L^{2}_h(\Sigma_1)}\le
CM,\end{align}
where
\begin{align}\label{eq-definition-M-0}\begin{split}
M&=\sum_{\substack{|\beta|\le p+1 \\ \beta_0\le 1}}\|\partial^\beta v\|_{L^2_h(\Omega)}
+\sum_{\substack{|\beta|\le p \\ \beta_0\le 1}}\|\partial^\beta v\|_{L^2_h(\Sigma_1)}\\
&\qquad+\|f\|_{H^{p}_h(\Omega)}
+\|\varphi\|_{H^{p+1}_h(\Sigma_{0})}+\|\psi\|_{H^{p+1}_h(\Sigma_{1})}.\end{split}\end{align}
We point out that not all derivatives of order $p+1$ in $M$ are included in the left-hand side of \eqref{eq-estimate-p-0-0}.
The summation of the boundary integrals are inserted for later purpose.

Next, take $1\le m\le p$ and consider a multi-index $\alpha$ with $|\alpha|=p$, $\alpha_0=0$ and $\alpha_1=m$.
Applying \eqref{eq-estimate-H1-alternative-h} to \eqref{eq-equation-beta-0}, we have
\begin{equation*}
q\|\partial^\alpha v\|_{H^{1}_h(\Omega)}+\|\partial_{z}\partial^\alpha v\|_{L^{2}_h(\Sigma_1)}
\leq C\{\|f_\alpha\|_{L^{2}_h(\Omega)}
+\|\partial^\alpha v\|_{H^{1}_h(\Sigma_{0})}+\|N_2(\partial^\alpha v)\|_{L^2_h(\Sigma_{1})}\}.
\end{equation*}
Then similarly,
\begin{align}\label{eq-estimate-p-m-0}\begin{split}
&q\sum_{\substack{|\alpha|= p \\ \alpha_0=0,\alpha_1=m}}\|\partial\partial^\alpha v\|_{L^2_h(\Omega)}
+\sum_{\substack{|\alpha|= p \\ \alpha_0=0,\alpha_1= m}}\|\partial_z\partial^\alpha v\|_{L^{2}_h(\Sigma_1)}\\
&\qquad\le C\big\{M
+\sum_{\substack{|\alpha|= p \\ \alpha_0=0,\alpha_1=m}}\|N_2(\partial^\alpha v)\|_{L^2_h(\Sigma_{1})}\big\}.
\end{split}\end{align}

{\it Step 3. Boundary integrals.} By keeping only the boundary integrals in \eqref{eq-estimate-p-0-0}
and \eqref{eq-estimate-p-m-0}, we have
\begin{align}\label{eq-estimate-p-0-0-boundary}
\sum_{\substack{|\beta|= p+1 \\ \beta_0=0, \beta_1= 1}}\|\partial^\beta v\|_{L^{2}_h(\Sigma_1)}\le CM,\end{align}
and
\begin{align}\label{eq-estimate-p-m-0-boundary}
\sum_{\substack{|\beta|= p+1 \\ \beta_0=0,\beta_1= m+1}}\|\partial^\beta v\|_{L^{2}_h(\Sigma_1)}
\le C\big\{M
+\sum_{\substack{|\alpha|= p \\ \alpha_0=0,\alpha_1=m}}\|N_2(\partial^\alpha v)\|_{L^2_h(\Sigma_{1})}\big\}.
\end{align}
We now analyze $N_2(\partial^\alpha v)$
in \eqref{eq-estimate-p-m-0-boundary}. Take $\alpha'$ with $|\alpha'|=p-1$ such that
$\partial^\alpha v=\partial^{\alpha'}\partial_zv$.
Then, with $\alpha$ replaced by $\alpha'$ in \eqref{eq-equation-beta-0}, we have
\begin{align}\label{eq-equation-beta-p-1-0}
\Box_{g}(\partial^{\alpha'} v)=\partial^{\alpha'} f
+\sum_{\substack{|\beta|\le p \\ \beta_0\le 1}}c_{\alpha,\beta}\partial^\beta v.
\end{align}
Note
$$\Box_{g}(\partial^{\alpha'} v)=N_2(\partial^{\alpha'} \partial_zv)+g^{AB}\partial_{AB}\partial^{\alpha'} v+\cdots,$$
where the omitted terms involve derivatives of order at most
$p$ as in the right-hand side of \eqref{eq-equation-beta-p-1-0}.
Hence,
\begin{align*}
N_2(\partial^{\alpha} v)=\partial^{\alpha'} f
+\sum_{\substack{|\beta|=p+1 \\ \beta_0=0, \beta_1=m-1}}c_{\alpha,\beta}\partial^\beta v
+\sum_{\substack{|\beta|\le p \\ \beta_0\le 1}}c_{\alpha,\beta}\partial^\beta v.
\end{align*}
We now restrict this identity to $\Sigma_1$ and take the $L^2$-norm. By the trace theorem, we have
$$\|\partial^{\alpha'} f\|_{L^2_h(\Sigma_{1})}\le C\|f\|_{H^p_h(\Omega)}.$$
This is a part of $M$. The boundary $L^2$-integral of $\partial^\beta v$, with $|\beta|\le p$ and $\beta_0\le 1$,
is also a part of $M$.
Therefore, for $1\le m\le p$,
\begin{align}\label{eq-equation-N-beta-p-1-0}
\sum_{\substack{|\alpha|= p \\ \alpha_0=0,\alpha_1=m}}
\|N_2(\partial^{\alpha} v)\|_{L^2_h(\Sigma_{1})}
\le C\big\{M
+\sum_{\substack{|\beta|=p+1 \\ \beta_0=0, \beta_1=m-1}}\|\partial^\beta v\|_{L^2_h(\Sigma_{1})}\big\}.
\end{align}
Similarly,
for $1\le m\le p$,
\begin{align}\label{eq-equation-N-tau-p-1-0}
\sum_{\substack{|\alpha|= p \\ \alpha_0=0,\alpha_1=m}}
\|\partial_\tau\partial^{\alpha} v\|_{L^2_h(\Sigma_{1})}
\le C\big\{M
+\sum_{\substack{|\beta|=p+1 \\ \beta_0=0, \beta_1\le m+1}}\|\partial^\beta v\|_{L^2_h(\Sigma_{1})}\big\}.
\end{align}

If $m=1$, then the second term in the right-hand side
in \eqref{eq-equation-N-beta-p-1-0}, with $\beta_1=0$, is bounded by
$\|\psi\|_{H^{p+1}_h(\Sigma_{1})}$, which is
a part of $M$. Hence,
\begin{align}\label{eq-equation-N-boundary-p-1-0}
\sum_{\substack{|\alpha|= p \\ \alpha_0=0,\alpha_1=1}}
\|N_2(\partial^{\alpha} v)\|_{L^2_h(\Sigma_{1})}\le CM.
\end{align}
With $m=1$ in \eqref{eq-estimate-p-m-0-boundary}, we have
\begin{align*}%\label{eq-estimate-p-1-0-boundary}
\sum_{\substack{|\beta|= p+1 \\ \beta_0=0,\beta_1= 2}}\|\partial^\beta v\|_{L^{2}_h(\Sigma_1)}
\le CM.
\end{align*}
With $m=2$ in \eqref{eq-equation-N-beta-p-1-0}, we have
\begin{align}\label{eq-equation-N-boundary-p-2-0}
\sum_{\substack{|\alpha|= p \\ \alpha_0=0,\alpha_1=2}}
\|N_2(\partial^{\alpha} v)\|_{L^2_h(\Sigma_{1})}\le CM.
\end{align}
We note that $\alpha_1$ in \eqref{eq-equation-N-boundary-p-2-0} improves by 1, compared with
$\alpha_1$ in \eqref{eq-equation-N-boundary-p-1-0}. By an iteration with \eqref{eq-equation-N-beta-p-1-0}
and \eqref{eq-estimate-p-m-0-boundary}, we obtain, for any $1\le m\le p$.
\begin{align}\label{eq-equation-N-boundary-p-m-0-general}
\sum_{\substack{|\alpha|= p \\ \alpha_0=0,\alpha_1=m}}
\|N_2(\partial^{\alpha} v)\|_{L^2_h(\Sigma_{1})}\le CM.
\end{align}
By substituting \eqref{eq-equation-N-boundary-p-m-0-general} in \eqref{eq-estimate-p-m-0}, we have, for any $1\le m\le p$.
\begin{align}\label{eq-estimate-p-m-0-general}
q\sum_{\substack{|\alpha|= p \\ \alpha_0=0,\alpha_1=m}}\|\partial\partial^\alpha v\|_{L^2_h(\Omega)}
+\sum_{\substack{|\alpha|= p \\ \alpha_0=0,\alpha_1= m}}\|\partial_z\partial^\alpha v\|_{L^{2}_h(\Sigma_1)}
\le CM.
\end{align}

{\it Step 4. Completion of the proof.}
By adding \eqref{eq-estimate-p-0-0} and \eqref{eq-estimate-p-m-0-general} for $m=1, \cdots, p$,
we obtain
\begin{align*}%\label{eq-estimate-p-m-0-general-sum}
q\sum_{\substack{|\beta|= p+1 \\ \beta_0\le 1}}\|\partial^\beta v\|_{L^2_h(\Omega)}
+\sum_{\substack{|\beta|= p+1 \\ \beta_0=0}}\|\partial^\beta v\|_{L^{2}_h(\Sigma_1)}
\le CM.
\end{align*}
Combining with \eqref{eq-equation-N-tau-p-1-0}, we have
\begin{align}\label{eq-estimate-p-m-0-general-sum}
q\sum_{\substack{|\beta|= p+1 \\ \beta_0\le 1}}\|\partial^\beta v\|_{L^2_h(\Omega)}
+\sum_{\substack{|\beta|= p+1 \\ \beta_0\le 1}}\|\partial^\beta v\|_{L^{2}_h(\Sigma_1)}
\le CM.
\end{align}
Now, the domain integrals of derivatives of $v$ of order $p+1$ in $M$ is the same as those in the left-hand side.
By choosing $q$ sufficiently large, we can absorb those terms in $M$. Then, we fix such a $q$ and remove $h$
from all integrals. By the definition of $M$ in \eqref{eq-definition-M-0}, we obtain
\begin{align}\label{eq-estimate-iteration-0-sum}\begin{split}
&\sum_{\substack{|\beta|=p+1\\ \beta_0\le 1}}\|\partial^\beta v\|_{L^2(\Omega)}
+\sum_{\substack{|\beta|=p+1\\ \beta_0\le 1}}\|\partial^\beta v\|_{L^2(\Sigma_1)}\\
&\quad\leq  C\big\{\sum_{\substack{|\beta|\le p\\ \beta_0\le 1}}\|\partial^\beta v\|_{L^2(\Omega)}
+\sum_{\substack{|\beta|\le p\\ \beta_0\le 1}}\|\partial^\beta v\|_{L^2(\Sigma_1)}\\
&\quad\qquad+\|f\|_{H^{p}(\Omega)}+\|\varphi\|_{H^{p+1}(\Sigma_0)}
+\|\psi\|_{H^{p+1}(\Sigma_{1})}\big\}.
\end{split}\end{align}
This is \eqref{eq-estimate-iteration-0}. \end{proof}

We point out that different versions of $H^1$-estimates
are applied to equations of $\partial^\alpha v$, with $|\alpha|=p$.
Specifically, we employ \eqref{eq-estimate-H1-h}
for $\alpha_1=0$
and employ \eqref{eq-estimate-H1-alternative-h} for $\alpha_1\ge 1$.
For $\alpha_1\ge 1$, it is easier to estimate
$\|N_2(\partial^{\alpha}v)\|_{L^2(\Sigma_1)}$ than $\|\partial^{\alpha}v\|_{H^1(\Sigma_1)}$.

\begin{corollary}\label{cor-radiation-Hk-j1}
Under the assumptions of Theorem \ref{theorem-partial-estia}, $v_i=\lim_{z\to 0} \partial _z^iv$ exists as an
$H^1$-function on $[0,T]\times S^2$, for any $i=0, \cdots, k-1$, and, for any
$j, m$ with $i+j+m\le k$ and $j\le 1$,
\begin{equation}\label{eq-estimate-radiation-Hk-j1}
\|\partial_\tau^j \partial_{S^2}^m v_i\|_{L^2([0,T]\times S^2)}
\leq C\{\|f\|_{H^{k-1}(\Omega)}+\|\varphi\|_{H^{k}(\Sigma_{0})}+\|\psi\|_{H^{k}(\Sigma_{1})}\}.
\end{equation}
\end{corollary}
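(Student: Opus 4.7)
The plan is to mimic the proof of Corollary \ref{cor-radiation-L2}, but now applied to $\partial^{\alpha'} v$ for each admissible multi-index $\alpha'$, using Theorem \ref{theorem-partial-estia} to handle the right-hand side.

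First, I would establish existence of the trace $v_i$. From Theorem \ref{theorem-partial-estia} applied with $\alpha = (0, i+1, 0, 0)$, $\partial_z^{i+1} v \in L^2(\Omega)$ whenever $i+1 \le k$. By Cauchy--Schwarz in the $z$-variable,
\begin{equation*}
\|\partial_z^i v(z_2,\cdot) - \partial_z^i v(z_1,\cdot)\|_{L^2([0,T]\times S^2)} \le |z_2-z_1|^{1/2}\|\partial_z^{i+1} v\|_{L^2(\Omega)},
\end{equation*}
so $\partial_z^i v(z,\cdot)$ is Cauchy in $L^2$ as $z\to 0^+$, giving the limit $v_i$.

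Next, I would redo Steps 1--4 of Theorem \ref{maintheo} (with the foliation $z = z(\tau) = \epsilon/(1-M\tau\epsilon)$) applied to $\partial^{\alpha'} v$, for each $\alpha'$ with $|\alpha'| \le k-1$ and $\alpha'_0 \le 1$. The governing equation $\Box_g(\partial^{\alpha'} v) = f_{\alpha'}$ was derived in the proof of Theorem \ref{theorem-partial-estia}, and $\|f_{\alpha'}\|_{L^2(\Omega)}$ is controlled by the right-hand side of \eqref{eq-estimate-radiation-Hk-j1} via Theorem \ref{theorem-partial-estia} itself. As observed in the proof of Corollary \ref{cor-radiation-L2}, the boundary term on $z=z(\tau)$ in the analog of \eqref{keyf} satisfies
\begin{equation*}
Q[\partial^{\alpha'} v](\nabla z, Y) - z'(\tau) Q[\partial^{\alpha'} v](\nabla\tau, Y) \ge (N_2\partial^{\alpha'} v)^2 + \tfrac{1}{2}m\, g^{AB}\partial_A\partial^{\alpha'} v\, \partial_B\partial^{\alpha'} v + m(\partial^{\alpha'} v)^2,
\end{equation*}
so the domain integral controls, uniformly in $\epsilon$, the $L^2$-norms of $N_2\partial^{\alpha'} v$, $\partial_A\partial^{\alpha'} v$, and $\partial^{\alpha'} v$ on the slice $z=z(\tau)$.

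Finally, I would let $\epsilon\to 0$ so that $z(\tau)\to 0$. Since $g^{11} = z^2 V\to 0$ and $g^{1A} = U^A\to 0$ as $z\to 0$, the vector field $N_2$ reduces to $\partial_\tau$ on $\{z=0\}$. Combined with the trace existence from the first paragraph and the $C^k$-regularity assumption on $v$, we have $\partial^{\alpha'} v|_{z=0} = \partial_\tau^{\alpha'_0}\partial_{S^2}^{\alpha'_2+\alpha'_3} v_{\alpha'_1}$. Thus, letting $j = \alpha'_0$, $i = \alpha'_1$, $m = \alpha'_2+\alpha'_3$, the three boundary quantities translate into $L^2$-bounds on $\partial_\tau^j\partial_{S^2}^m v_i$, $\partial_\tau^j\partial_{S^2}^{m+1} v_i$, and $\partial_\tau^{j+1}\partial_{S^2}^m v_i$ (the last only when $\alpha'_0 = 0$, preserving $j\le 1$). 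A case analysis in $(i,j,m)$ confirms that every triple with $j\le 1$ and $i+j+m\le k$ is covered by some choice of $\alpha'$ with $|\alpha'|\le k-1$ and $\alpha'_0\le 1$.

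The main obstacle I anticipate is the careful bookkeeping in the passage to the limit $\epsilon\to 0$: one must verify that the $\epsilon$-uniform traces on $z=z(\tau)$ actually converge to the traces at $z=0$ (using the regularity of $v$ and the uniform bounds), and that the degeneration $g^{11}\to 0$ does not spoil the identification $N_2\partial^{\alpha'}v\to \partial_\tau\partial^{\alpha'}v$. Once these convergences are justified, the index-counting argument yields the full range of $(i,j,m)$ claimed in the corollary.
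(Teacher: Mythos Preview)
Your approach is correct and is exactly the argument the paper intends (the paper gives no separate proof, merely pointing back to Corollary~\ref{cor-radiation-L2}, whose method you apply to $\partial^{\alpha'}v$ using the equations and estimates from the proof of Theorem~\ref{theorem-partial-estia}). One refinement: restrict to $\alpha'_0=0$, since the equation $\Box_g(\partial^{\alpha'}v)=f_{\alpha'}$ with $\|f_{\alpha'}\|_{L^2(\Omega)}$ controlled by Theorem~\ref{theorem-partial-estia} is established in that proof only for $\alpha'_0=0$ (for $\alpha'_0=1$ the right-hand side can contain $\partial^\beta v$ with $\beta_0=2$); this already suffices, because the $(N_2\partial^{\alpha'}v)^2$ boundary term with $\alpha'_0=0$ supplies every $j=1$ case, and the $(\partial^{\alpha'}v)^2$ and $(\partial_A\partial^{\alpha'}v)^2$ terms cover $j=0$.
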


The existence of $v_0$ is already addressed in Corollary \ref{cor-radiation-L2}.

\smallskip

Now we estimate derivatives of $v$ of higher order with respect to $\tau$.

\begin{theorem}\label{theorem-Hp-estimates-0}
For some integers $k\ge l\ge 2$, suppose $f\in H^{k+l-2}(\Omega)$,
$\varphi\in H^{k+l-1}(\Sigma_{0})$, and $\psi\in H^{k+l-1}(\Sigma_{1})$. Let $v$ be a
$C^{k+l}(\Omega)\cap C^{k+l-1}(\Omega\cup\Sigma_0\cup\Sigma_1)$-solution of
\eqref{eq}.
Then, for any $|\alpha|\le k$ with $\alpha_0\le l$,
\begin{align*}
\|\partial^{\alpha}v\|_{L^2(\Omega)}
\leq C\{\|f\|_{H^{k+l-2}(\Omega)}+\|\varphi\|_{H^{k+l-1}(\Sigma_{0})}+\|\psi\|_{H^{k+l-1}(\Sigma_{1})}\},
\end{align*}
where
$C$ is a positive constant depending only on $k$, $T$, $\lambda$, $\Lambda$,
$|g^{ij}|_{C^{k+l-1}(\Omega)}$, $ |a^{\alpha}|_{C^{k+l-2}(\Omega)}$, and
$|\omega|_{C^{k+l-2}(\Omega)}$.
\end{theorem}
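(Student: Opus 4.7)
The plan is to induct on $l$, with Theorem \ref{theorem-partial-estia} supplying the $l = 1$ statement used at the base $l = 2$. Assume the statement for $l - 1$ at every admissible $k' \ge l - 1$, and fix $k \ge l$.

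The central reduction is to apply the inductive hypothesis to $w := \partial_\tau v$. By Lemma \ref{commulemma}, $w$ solves $Lw = \tilde f$ with $\tilde f := \partial_\tau f - [\partial_\tau, L]v$, where $[\partial_\tau, L]v$ is second-order in $v$. A structural point I will exploit: since $g^{00} = g^{0A} = 0$ and $g^{01} = 1$ in \eqref{eq-metric-g}, the principal part of $[\partial_\tau, \Box_g]v$ computed from \eqref{eq-expression-box} reduces to $z^2\partial_\tau V\cdot\partial_{zz}v + 2\partial_\tau g^{1A}\partial_{zA}v + \partial_\tau g^{AB}\partial_{AB}v$, so no $\tau$-derivative falls on $v$ in the top-order commutator. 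The boundary datum is $w|_{\Sigma_1} = \partial_\tau\psi \in H^{k+l-2}(\Sigma_1)$. The initial datum $w|_{\Sigma_0} = \partial_\tau v|_{\Sigma_0}$ is not directly given, and will be recovered by evaluating $Lv = f$ at $\tau = 0$, solving \eqref{eq-expression-box} for $2\partial_\tau\partial_z v|_{\Sigma_0}$ in terms of $\varphi$, $f|_{\Sigma_0}$ and the metric, and integrating in $z$ from $z_0$ downward using the corner condition $\partial_\tau v(0,z_0,\cdot) = \partial_\tau\psi(0,\cdot)$.

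Invoking the inductive hypothesis on $w$ with parameters $(k-1,\, l-1)$ gives, for $|\beta| \le k - 1$ and $\beta_0 \le l - 1$,
\[
\|\partial^\beta w\|_{L^2(\Omega)} \le C\bigl\{\|\tilde f\|_{H^{k+l-4}(\Omega)} + \|w|_{\Sigma_0}\|_{H^{k+l-3}(\Sigma_0)} + \|w|_{\Sigma_1}\|_{H^{k+l-3}(\Sigma_1)}\bigr\}.
\]
Translating $\partial^\beta w = \partial^{\beta + (1,0,0,0)} v$ yields the bound for every $\alpha$ with $1 \le \alpha_0 \le l$ and $|\alpha| \le k$; the remaining case $\alpha_0 = 0$ follows from the inductive hypothesis applied directly to $v$ at $(k,\, l-1)$.

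The main obstacle is controlling $\|[\partial_\tau, L]v\|_{H^{k+l-4}(\Omega)}$: a naive count admits derivatives of $v$ of total order up to $k + l - 2$, well beyond what the inductive bound covers. To resolve this, I plan a layered iteration paralleling \eqref{eq-estimate-iteration-0}. Whenever a top-$\tau$-order derivative of $v$ containing a $\partial_z$ factor arises in differentiating the commutator, I replace each occurrence of $\partial_\tau\partial_z v$ via the equation \eqref{eq} by non-$\tau$ second-order derivatives of $v$ plus derivatives of $f$, iteratively exchanging $\tau$-order for spatial order. The terms thus produced have $\tau$-order at most $l - 1$ and total order at most $k$, and are bounded by the inductive hypothesis at $(k,\, l-1)$; the one extra derivative hidden in the $H^{k+l-1}(\Sigma_0), H^{k+l-1}(\Sigma_1)$ and $H^{k+l-2}(\Omega)$ regularity hypotheses is precisely what accommodates this exchange, implementing the "half-derivative loss" along the $\tau$-direction anticipated in the introduction.
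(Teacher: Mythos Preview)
There is a genuine gap in your scheme, and it sits exactly where you flag the ``main obstacle.'' Your exchange $\partial_\tau\partial_z v\mapsto$ (spatial second-order terms of $v$) $+$ $f$ requires a $\partial_z$ factor to be present. But the commutator you must control contains $(\partial_\tau g^{AB})\partial_{AB}v$, which carries no $\partial_z$ at all (in \eqref{eq-metric-g} the angular block $g^{AB}=h^{AB}$ is $\tau$-dependent for general asymptotically flat data). When you differentiate $\tilde f$ to take its $H^{k+l-4}$-norm, a pure-$\tau$ derivative $\partial_\tau^{k+l-4}$ applied to this piece produces $(\partial_\tau g^{AB})\,\partial_\tau^{k+l-4}\partial_{AB}v$: total order $k+l-2$, $\tau$-order $k+l-4$, and \emph{no} $\partial_z$ available for the substitution. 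Your claim that ``the terms thus produced have $\tau$-order at most $l-1$ and total order at most $k$'' is therefore false. More broadly, the regularity budget lets you invoke the inductive hypothesis at level $(k',\,l-1)$ only for $k'\le k+1$ (since it needs data in $H^{k'+l-2}$ and you have $H^{k+l-1}$), whereas the $H^{k+l-4}$-norm of the commutator forces total order up to $k+l-2$; for $l\ge 4$ this already exceeds $k+1$, so no amount of $\tau$-order trading will close the loop.

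The paper avoids this circularity by \emph{not} treating the equation for $\partial_\tau v$ as a black box. Instead it works layer by layer in the total order $p$: for each $\alpha$ with $|\alpha|=p$ and $\alpha_0=l$ it applies the \emph{weighted} $H^1$-estimates \eqref{eq-estimate-H1-h}--\eqref{eq-estimate-H1-alternative-h} directly to $\Box_g(\partial^\alpha v)=f_\alpha$. The point is that $f_\alpha$ contains derivatives of $v$ of order $\le p+1$ with $\beta_0\le l+1$, and these are exactly the derivatives appearing on the left once one sums over all such $\alpha$. Choosing the weight parameter $q$ large then absorbs the top-order right-hand side into the left, yielding the closed iteration \eqref{eq-estimate-iteration-l}--\eqref{eq-estimate-iteration-l=p}. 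The initial trace $\partial^\alpha v|_{\Sigma_0}$ is handled just as you propose, by integrating the ODE \eqref{eq-ODE-tau} in $z$; the boundary trace on $\Sigma_1$ is handled by the $N_2$-version of the $H^1$-estimate together with an iteration in $\alpha_1$ analogous to Step~3 of Theorem~\ref{theorem-partial-estia}. What you are missing is precisely this absorption mechanism: without it the commutator terms are genuinely uncontrolled.
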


\begin{proof}
We will prove, for $1\le l {\color{red} <} p\le k-1$,
\begin{align}\label{eq-estimate-iteration-l}\begin{split}
&\sum_{\substack{|\beta|=p+1\\ \beta_0\le l+1}}\|\partial^\beta v\|_{L^2(\Omega)}
+\sum_{\substack{|\beta|=p+1\\ \beta_0\le l+1}}\|\partial^\beta v\|_{L^2(\Sigma_1)}\\
&\quad\leq  C\big\{\sum_{\substack{|\beta|\le p\\ \beta_0\le l+1}}\|\partial^\beta v\|_{L^2(\Omega)}
+\sum_{\substack{|\beta|\le p\\ \beta_0\le l+1}}\|\partial^\beta v\|_{L^2(\Sigma_1)}\\
&\quad\qquad+\|f\|_{H^{p+l}(\Omega)}+\|\varphi\|_{H^{p+l+1}(\Sigma_0)}
+\|\psi\|_{H^{p+l+1}(\Sigma_{1})}\big\},
\end{split}\end{align}
and, for $1\le p=l\le k-1$,
\begin{align}\label{eq-estimate-iteration-l=p}\begin{split}
&\|\partial^{p+1}_\tau v\|_{L^2(\Omega)}
+\|\partial_z\partial^p_\tau v\|_{L^2(\Sigma_1)}\\
&\quad\leq  C\big\{\sum_{\substack{|\beta|\le p+1\\ \beta_0\le p}}\|\partial^\beta v\|_{L^2(\Omega)}
+\|f\|_{H^{2p}(\Omega)}+\|\varphi\|_{H^{2p+1}(\Sigma_0)}
+\|\psi\|_{H^{2p+1}(\Sigma_{1})}\big\}.
\end{split}\end{align}
Then, the desired result follows from a simple iteration and Theorem \ref{theorem-partial-estia}.
The proof of (\ref{eq-estimate-iteration-l}) and \eqref{eq-estimate-iteration-l=p}
consists of several steps.

{\it Step 1. Equations of derivatives of $v$ and initial values on $\Sigma_0$.}
Take an arbitrary multi-index $\alpha$ with $|\alpha|=p$ and $\alpha_0=l$. Then,
\begin{align}\label{eq-equation-beta}
\Box_{g}(\partial^\alpha v)=f_\alpha,
\end{align}
where
\begin{align}\label{eq-expression-f-beta}
f_\alpha=\partial^\alpha f
+\sum_{\substack{|\beta|\le p+1\\ \beta_0\le l+1}}c_\beta \partial^\beta v.
\end{align}
In particular, if $p=l$, we have
\begin{align}\label{eq-expression-f-beta-p=l}
f_\alpha=\partial^\alpha f+a\partial_\tau^{p+1}v
+\sum_{\substack{|\beta|\le p+1\\ \beta_0\le p}}c_\beta \partial^\beta v.
\end{align}
In fact, there is only $\alpha$ with $|\alpha|=\alpha_0=p$; namely, $\alpha=(p,0,0,0)$.

Next, by restricting the equation $Lu=f$ to $\Sigma_0$, we have
\begin{equation}\label{eq-ODE-tau}
2\partial_{z}(\partial_{\tau}v)+a^{0}\partial_\tau v=f_1,
\end{equation}
where
$$f_1=f+\sum_{\substack{|\beta|\le 2\\ \beta_0=0}}a_\beta\partial^{\beta}v.$$
We point out that no derivatives of $v$ with respect to $\tau$ appear in $f_1$.
We view \eqref{eq-ODE-tau} as an ODE of $\partial_\tau v$ in $z$ on $\Sigma_0$ with the initial value given by
$$\partial_\tau v=\partial_\tau \psi\quad\text{on }\Sigma_0\cap \Sigma_1.$$
Then,
\begin{align*}%\label{eq-expression-v-tau-k}
\partial_\tau v=\partial_\tau \psi e^{-\frac{1}{2}\int_{z}^{z_{0}}a^{0}dz'}
-\frac12\int_z^{z_0}f_1e^{\frac{1}{2}\int_{z'}^{z_{0}}a^{0}dz''}dz'
\quad\text{on }\Sigma_0.\end{align*}
Therefore,
\begin{equation}\label{initdata-1z}
\|\partial_\tau v\|_{L^{2}(\Sigma_{0})}
\leq C\left\{\|\varphi\|_{H^{2}(\Sigma_{0})}
+\|\partial_\tau \psi\|_{L^2(\Sigma_{0}\cap \Sigma_{1})}+\|f\|_{L^2(\Sigma_{0})}\right\}.
\end{equation}
For $l\ge 2$, by applying $\partial_{\tau}^{l-1}$ to \eqref{eq-ODE-tau}, we obtain
\begin{equation*}
2\partial_{z}(\partial_{\tau}^{l}v)+a\partial_{\tau}^{l}v=f_{l},
\end{equation*}
where
$$f_{l}=\partial_{\tau}^{l-1}f
+\sum_{\substack{|\beta|\le l+1\\ \beta_0\le l-1}}c_{\beta}\partial^{\beta}v.$$
Similarly, we view this as an ODE of $\partial_{\tau}^{l}v$ in $z$ on $\Sigma_0$
with the initial value given  by
$$\partial_{\tau}^{l}v=\partial_{\tau}^{l}\psi\quad\text{on }\Sigma_0\cap \Sigma_1.$$
Then,
\begin{align*}%\label{eq-expression-v-tau-k}
\partial^{l}_\tau v=\partial^{l}_\tau \psi e^{-\frac{1}{2}\int_{z}^{z_{0}}adz'}
-\frac12\int_z^{z_0}f_{l}e^{\frac{1}{2}\int_{z'}^{z_{0}}adz''}dz'
\quad\text{on }\Sigma_0.\end{align*}
For
$\alpha=(l, \alpha_1, \alpha_2, \alpha_3)$, we write $\alpha'=(0, \alpha_1, \alpha_2, \alpha_3)$. Then,
\begin{align*}%\label{eq-expression-v-beta}
\partial^\alpha v=
\partial^{\alpha'}\big\{\partial^{l}_\tau \psi e^{-\frac{1}{2}\int_{z}^{z_{0}}adz'}
-\frac12\int_z^{z_0}f_{l}e^{-\frac12\int_{z'}^{z_{0}}adz''}dz'\big\}
\quad\text{on }\Sigma_0.\end{align*}
Hence, for $\alpha$ with $|\alpha|=p$ and $\alpha_0=l$,
\begin{align*}%\label{initdata-0}\begin{split}
\|\partial^{\alpha}v\|_{L^{2}(\Sigma_{0})}
\leq C\big\{\|f\|_{H^{p-1}(\Sigma_{0})}
+\|\partial_\tau^{l}\psi\|_{H^{p-l}(\Sigma_{0}\cap \Sigma_{1})}
+\sum_{\substack{|\beta|\le p+1\\ \beta_0\le l-1}}\|\partial^{\beta}v\|_{L^{2}(\Sigma_{0})}
\big\}.%\end{split}
\end{align*}
By the trace theorem,
we have
\begin{align*}%\label{initdata-0}\begin{split}
\|\partial^{\alpha}v\|_{L^{2}(\Sigma_{0})}
\leq C\big\{\|f\|_{H^{p}(\Omega)}
+\|\psi\|_{H^{p+1}(\Sigma_{1})}
+\sum_{\substack{|\beta|\le p+1\\ \beta_0\le l-1}}\|\partial^{\beta}v\|_{L^{2}(\Sigma_{0})}
\big\}.%\end{split}
\end{align*}
We note that in the summation above, the highest degree of derivatives increases by 1 but the highest
degree of derivatives with respect to $\tau$ decreases by 1.
 So we can iterate this inequality $l$ times and obtain, for $\alpha$ with $|\alpha|=p$ and $\alpha_0=l$,
\begin{align}\label{initdata}
\|\partial^{\alpha}v\|_{L^{2}(\Sigma_{0})}
\leq C\big\{\|f\|_{H^{p+l-1}(\Omega)}+\|\varphi\|_{H^{p+l}(\Sigma_{0})}
+\|\psi\|_{H^{p+l}(\Sigma_{1})}\big\},
\end{align}
where $C$ is a positive constant depending only on $|\beta|$, $|g^{ij}|_{C^{p+l-1}(\Sigma_{0})}$,
$|a^{i}|_{C^{p+l-2}(\Sigma_{0})}$, and $|\omega|_{C^{p+l-2}(\Sigma_{0})}$.

The rest of the proof is similar as the proof of Theorem \ref{theorem-partial-estia}.
\smallskip

{\it Step 2. Estimates of $\partial^\beta v$, with $|\beta|=p+1$ and $\beta_0=l,l+1$.}
First, we consider a multi-index $\alpha$ with $|\alpha|=p$, $\alpha_0=l$ and $\alpha_1=0$.
By applying (\ref{eq-estimate-H1-h}) to (\ref{eq-equation-beta}), we have
\begin{align*}%\begin{split}
q\|\partial^\alpha v\|_{H^{1}_h(\Omega)}+\|\partial_{z}\partial^\alpha v\|_{L^{2}_h(\Sigma_1)}
\leq C\{\|f_\alpha\|_{L^{2}_h(\Omega)}
+\|\partial^\alpha v\|_{H^{1}_h(\Sigma_{0})}+\|\partial^\alpha v\|_{H^{1}_h(\Sigma_{1})}\}.
%\end{split}
\end{align*}
We  note that $\partial^\beta v=\partial^\beta \psi$ on $\Sigma_1$ if $\beta_1=0$.  By (\ref{initdata}), we get
\begin{align}\label{eq-estimate-p-l-0-general}\begin{split}
&q\|\partial\partial^\alpha v\|_{L^2_h(\Omega)}
+\|\partial_{z}\partial^\alpha v\|_{L^{2}_h(\Sigma_1)}\\
&\qquad\leq C\{\|f_\alpha\|_{L^{2}_h(\Omega)}
+\|f\|_{H^{p+l}_h(\Omega)}
+\|\varphi\|_{H^{p+l+1}_h(\Sigma_{0})}+\|\psi\|_{H^{p+l+1}_h(\Sigma_{1})}\big\}.
\end{split}
\end{align}

We first consider the case $l=p$. Then, $\alpha=(p,0,0,0)$. We  keep only the $L^2$-norm of
$\partial^{p+1}_\tau v$ in the left-hand side and use it to absorb the same term in $f_\alpha$ given by
\eqref{eq-expression-f-beta-p=l} by choosing $q$ large.
Then, we fix such a $q$ and remove $h$
from all integrals. By \eqref{eq-expression-f-beta-p=l}, we have
\begin{align*}%\label{eq-estimate-iteration-l=p}\begin{split}
&\|\partial^{p+1}_\tau v\|_{L^2(\Omega)}
+\|\partial_z\partial^p_\tau v\|_{L^2(\Sigma_1)}\\
&\quad\leq  C\big\{\sum_{\substack{|\beta|\le p+1\\ \beta_0\le p}}\|\partial^\beta v\|_{L^2(\Omega)}
+\|f\|_{H^{2p}(\Omega)}+\|\varphi\|_{H^{2p+1}(\Sigma_0)}
+\|\psi\|_{H^{2p+1}(\Sigma_{1})}\big\}.
%\end{split}
\end{align*}
This is \eqref{eq-estimate-iteration-l=p}.

In the following, we consider $l<p$. By \eqref{eq-estimate-p-l-0-general} and a simple summation, we have
\begin{align}\label{eq-estimate-p-l-0}
q\sum_{\substack{|\alpha|= p \\ \alpha_0=l, \alpha_1=0}}\|\partial\partial^\alpha v\|_{L^2_h(\Omega)}
+\sum_{\substack{|\alpha|= p \\ \alpha_0=l,  \alpha_1= 0}}\|\partial_z\partial^\alpha v\|_{L^{2}_h(\Sigma_1)}\le
CM,\end{align}
where
\begin{align}\label{eq-definition-M-l-0}\begin{split}
M&=\sum_{\substack{|\beta|\le p+1 \\ \beta_0\le l+1}}\|\partial^\beta v\|_{L^2_h(\Omega)}
+\sum_{\substack{|\beta|\le p \\ \beta_0\le l+1}}\|\partial^\beta v\|_{L^2_h(\Sigma_1)}\\
&\qquad+\|f\|_{H^{p+l}_h(\Omega)}
+\|\varphi\|_{H^{p+l+1}_h(\Sigma_{0})}+\|\psi\|_{H^{p+l+1}_h(\Sigma_{1})}.\end{split}\end{align}

Next, take $1\le m\le p$ and consider a multi-index $\alpha$ with $|\alpha|=p$, $\alpha_0=l$ and
$\alpha_1=m$. Applying (\ref{eq-estimate-H1-alternative-h}) to (\ref{eq-equation-beta}), we have
\begin{equation*}
q\|\partial^\alpha v\|_{H^{1}_h(\Omega)}+\|\partial_{z}\partial^\alpha v\|_{L^{2}_h(\Sigma_1)}
\leq C\{\|f_\alpha\|_{L^{2}_h(\Omega)}
+\|\partial^\alpha v\|_{H^{1}_h(\Sigma_{0})}+\|N_2(\partial^\alpha v)\|_{L^2_h(\Sigma_{1})}\}.
\end{equation*}
Then similarly,
\begin{align}\label{eq-estimate-p-m-l-0}\begin{split}
&q\sum_{\substack{|\alpha|= p \\ \alpha_0=l,\alpha_1=m}}\|\partial\partial^\alpha v\|_{L^2_h(\Omega)}
+\sum_{\substack{|\alpha|= p \\ \alpha_0=l,\alpha_1= m}}\|\partial_z\partial^\alpha v\|_{L^{2}_h(\Sigma_1)}\\
&\qquad\le C\big\{M
+\sum_{\substack{|\alpha|= p \\ \alpha_0=l,\alpha_1=m}}\|N_2(\partial^\alpha v)\|_{L^2_h(\Sigma_{1})}\big\}.
\end{split}\end{align}

{\it Step 3. Boundary integrals.}
By keeping only the boundary integrals in \eqref{eq-estimate-p-l-0}
and \eqref{eq-estimate-p-m-l-0}, we have
\begin{align}\label{eq-estimate-p-0-l-boundary}
\sum_{\substack{|\beta|= p+1 \\ \beta_0=l, \beta_1= 1}}\|\partial^\beta v\|_{L^{2}_h(\Sigma_1)}\le CM,\end{align}
and
\begin{align}\label{eq-estimate-p-m-l-boundary}
\sum_{\substack{|\beta|= p+1 \\ \beta_0=l,\beta_1= m+1}}\|\partial^\beta v\|_{L^{2}_h(\Sigma_1)}
\le C\big\{M
+\sum_{\substack{|\alpha|= p \\ \alpha_0=l,\alpha_1=m}}\|N_2(\partial^\alpha v)\|_{L^2_h(\Sigma_{1})}\big\}.
\end{align}
Similar to \eqref{eq-equation-N-beta-p-1-0} and \eqref{eq-equation-N-tau-p-1-0}, we have for $1\le m\le p$,
\begin{align}\label{eq-equation-N-beta-p-1-0-l}
\sum_{\substack{|\alpha|= p \\ \alpha_0=l,\alpha_1=m}}
\|N_2(\partial^{\alpha} v)\|_{L^2_h(\Sigma_{1})}
\le C\big\{M
+\sum_{\substack{|\beta|=p+1 \\ \beta_0=l, \beta_1=m-1}}\|\partial^\beta v\|_{L^2_h(\Sigma_{1})}\big\},
\end{align}
and
\begin{align}\label{eq-equation-N-tau-p-1-0-l}
\sum_{\substack{|\alpha|= p \\ \alpha_0=l,\alpha_1=m}}
\|\partial_\tau\partial^{\alpha} v\|_{L^2_h(\Sigma_{1})}
\le C\big\{M
+\sum_{\substack{|\beta|=p+1 \\ \beta_0=l, \beta_1\le m+1}}\|\partial^\beta v\|_{L^2_h(\Sigma_{1})}\big\}.
\end{align}

If $m=1$, then the second term in \eqref{eq-equation-N-beta-p-1-0-l}, with $\beta_1=0$, is bounded by
$\|\psi\|_{H^{p+1}_h(\Sigma_{1})}$, which is
a part of $M$. Hence,
\begin{align}\label{eq-equation-N-boundary-p-1-0-l}
\sum_{\substack{|\alpha|= p \\ \alpha_0=l,\alpha_1=1}}
\|N_2(\partial^{\alpha} v)\|_{L^2_h(\Sigma_{1})}\le CM.
\end{align}
With $m=1$ in \eqref{eq-estimate-p-m-l-boundary}, we have
\begin{align*}%\label{eq-estimate-p-1-0-boundary}
\sum_{\substack{|\beta|= p+1 \\ \beta_0=l,\beta_1= 2}}\|\partial^\beta v\|_{L^{2}_h(\Sigma_1)}
\le CM.
\end{align*}
With $m=2$ in \eqref{eq-equation-N-beta-p-1-0-l}, we have
\begin{align}\label{eq-equation-N-boundary-p-2-0-l}
\sum_{\substack{|\alpha|= p \\ \alpha_0=l,\alpha_1=2}}
\|N_2(\partial^{\alpha} v)\|_{L^2_h(\Sigma_{1})}\le CM.
\end{align}
We note that $\alpha_1$ in \eqref{eq-equation-N-boundary-p-2-0-l} improves by 1, compared with
$\alpha_1$ in \eqref{eq-equation-N-boundary-p-1-0-l}. By an iteration with \eqref{eq-equation-N-beta-p-1-0-l}
and \eqref{eq-estimate-p-m-l-boundary}, we obtain, for any $1\le m\le p$.
\begin{align}\label{eq-equation-N-boundary-p-m-l-general}
\sum_{\substack{|\alpha|= p \\ \alpha_0=l,\alpha_1=m}}
\|N_2(\partial^{\alpha} v)\|_{L^2_h(\Sigma_{1})}\le CM.
\end{align}
By substituting \eqref{eq-equation-N-boundary-p-m-l-general} in \eqref{eq-estimate-p-m-l-0}, we have, for any $1\le m\le p$.
\begin{align}\label{eq-estimate-p-m-l-general}
q\sum_{\substack{|\alpha|= p \\ \alpha_0=l,\alpha_1=m}}\|\partial\partial^\alpha v\|_{L^2_h(\Omega)}
+\sum_{\substack{|\alpha|= p \\ \alpha_0=l,\alpha_1= m}}\|\partial_z\partial^\alpha v\|_{L^{2}_h(\Sigma_1)}
\le CM.
\end{align}

{\it Step 4. Completion of the proof.}
By adding (\ref{eq-estimate-p-l-0}) and (\ref{eq-estimate-p-m-l-general}) for $m=1, \cdot\cdot\cdot,p$, we have
\begin{align*}%\label{eq-estimate-p-m-0-general-sum}
q\sum_{\substack{|\beta|= p+1 \\ \beta_0=l,l+1}}\|\partial^\beta v\|_{L^2_h(\Omega)}
+\sum_{\substack{|\beta|= p+1 \\ \beta_0=l}}\|\partial^\beta v\|_{L^{2}_h(\Sigma_1)}
\le CM.
\end{align*}
Combining with \eqref{eq-equation-N-tau-p-1-0-l}, we have
\begin{align*}%\label{eq-estimate-p-m-0-general-sum-l}
q\sum_{\substack{|\beta|= p+1 \\ \beta_0=l,l+1}}\|\partial^\beta v\|_{L^2_h(\Omega)}
+\sum_{\substack{|\beta|= p+1 \\ \beta_0=l, l+1}}\|\partial^\beta v\|_{L^{2}_h(\Sigma_1)}
\le CM.
\end{align*}
Note that the above estimate obviously holds for $\beta_0\le l-1$. Hence,
\begin{align*}%\label{eq-estimate-p-m-0-general-sum-l}
q\sum_{\substack{|\beta|= p+1 \\ \beta_0\le l+1}}\|\partial^\beta v\|_{L^2_h(\Omega)}
+\sum_{\substack{|\beta|= p+1 \\ \beta_0\le l+1}}\|\partial^\beta v\|_{L^{2}_h(\Sigma_1)}
\le CM.
\end{align*}
Now, the domain integrals of derivatives of $v$ of order $p+1$ in $M$ is the same as those in the left-hand side.
By choosing $q$ sufficiently large, we can absorb those terms in $M$. Then, we fix such a $q$ and remove $h$
from all integrals. By the definition of $M$ in \eqref{eq-definition-M-l-0}, we obtain
\begin{align*}%\label{eq-estimate-iteration-l-sum}\begin{split}
&\sum_{\substack{|\beta|=p+1\\ \beta_0\le l+1}}\|\partial^\beta v\|_{L^2(\Omega)}
+\sum_{\substack{|\beta|=p+1\\ \beta_0\le l+1}}\|\partial^\beta v\|_{L^2(\Sigma_1)}\\
&\quad\leq  C\big\{\sum_{\substack{|\beta|\le p\\ \beta_0\le l+1}}\|\partial^\beta v\|_{L^2(\Omega)}
+\sum_{\substack{|\beta|\le p\\ \beta_0\le l+1}}\|\partial^\beta v\|_{L^2(\Sigma_1)}\\
&\quad\qquad+\|f\|_{H^{p+l}(\Omega)}+\|\varphi\|_{H^{p+l+1}(\Sigma_0)}
+\|\psi\|_{H^{p+l+1}(\Sigma_{1})}\big\}.
%\end{split}
\end{align*}
This is \eqref{eq-estimate-iteration-l}.
\end{proof}

\begin{corollary}\label{cor-radiation-Hk-jl}
Under the assumptions of Theorem \ref{theorem-Hp-estimates-0}, for
$v_i$ as in Corollary \ref{cor-radiation-Hk-j1}, for $i=0, \cdots, k-1$, and, for any
$j, m$ with $i+j+m\le k$ and $j\le l$,
\begin{equation}\label{eq-estimate-radiation-Hk-jl}
\|\partial_\tau^j \partial_{S^2}^m v_i\|_{L^2([0,T]\times S^2)}
\leq C\{\|f\|_{H^{k+l-2}(\Omega)}+\|\varphi\|_{H^{k+l-1}(\Sigma_{0})}+\|\psi\|_{H^{k+l-1}(\Sigma_{1})}\}.
\end{equation}
\end{corollary}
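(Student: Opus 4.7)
The existence of each $v_i$ as an $H^1$-function on $[0,T]\times S^2$ is already part of Corollary \ref{cor-radiation-Hk-j1}; only the estimate \eqref{eq-estimate-radiation-Hk-jl} for $j\le l$ remains. The plan is to repeat, inside the proof of Theorem \ref{theorem-Hp-estimates-0}, the trace-extraction observation that produced Corollary \ref{cor-radiation-L2} out of the proof of Theorem \ref{maintheo}: the $z=z(\tau)$ boundary piece of the energy identity \eqref{keyf}, which is \emph{discarded} there as having a good sign, is here \emph{retained} and yields the $L^2$-trace bound on $z=0$.

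Fix $i,j,m$ with $i+j+m\le k$ and $j\le l$; let $\alpha$ be a $4$-index with $\alpha_0=j$, $\alpha_1=i$, and spherical components of total length $m$, so $|\alpha|\le k$ and $\alpha_0\le l$. Set $w=\partial^\alpha v$, which solves $\Box_g w=f_\alpha$ by \eqref{eq-equation-beta}. Apply the identity \eqref{keyf} to $w$ on the cut-off domain $\Omega_\epsilon$ with the curve $z=z(\tau)$ of \eqref{eq-definition-z}. The computation of Corollary \ref{cor-radiation-L2} gives
\[
Q[w](\nabla z,Y)-z'(\tau)Q[w](\nabla\tau,Y)\ \ge\ (N_2w)^2+\tfrac12 m\,g^{AB}\partial_Aw\,\partial_Bw+m\,w^2
\]
on $z=z(\tau)$, so this contribution can be moved from the right side of \eqref{keyf} to the left. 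Letting $\epsilon\to 0$ and using $N_2\to\partial_\tau$ at $z=0$ yields
\[
\int_0^T\!\!\int_{S^2}\bigl[(\partial_\tau w)^2+|\nabla_{S^2}w|^2+w^2\bigr]\,d\Sigma\,d\tau\ \le\ C\cdot\text{(remaining RHS of \eqref{keyf} for }w\text{)},
\]
whose left side dominates $\|\partial_\tau^j\partial_{S^2}^m v_i\|_{L^2([0,T]\times S^2)}^2$ since $w|_{z=0}=\partial_\tau^j\partial_{S^2}^m v_i$.

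The remaining right side collects $\|f_\alpha\|_{L^2_h(\Omega)}^2$, the $\Sigma_0$-piece bounded by $\|w\|_{H^1_h(\Sigma_0)}$, and the $\Sigma_1$-piece bounded either by $\|w\|_{H^1_h(\Sigma_1)}$ or, through \eqref{eq-estimate-H1-alternative-h}, by $\|N_2w\|_{L^2_h(\Sigma_1)}$. These are precisely the quantities the proof of Theorem \ref{theorem-Hp-estimates-0} already dominates by $\|f\|_{H^{k+l-2}(\Omega)}+\|\varphi\|_{H^{k+l-1}(\Sigma_0)}+\|\psi\|_{H^{k+l-1}(\Sigma_1)}$: the $\Sigma_0$-contribution via the ODE recurrence of Step 1, the $\Sigma_1$-contribution via the closed-system boundary absorption of Step 3, and the top-order terms in $f_\alpha$ (of order $|\alpha|+1\le k+1$) via the $q$-large absorption into the interior $H^1_h$-norm of $w$ appearing simultaneously on the left of \eqref{keyf}.

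The expected main obstacle is purely bookkeeping: verifying that the weight $h=e^{qz-p\tau}$ remains uniformly positive as $z\to 0$ (it does, since $h\ge e^{-pT}$), and that the index constraints $i+j+m\le k$, $j\le l$ slot exactly into the parameter range where the closed-system mechanism of Theorem \ref{theorem-Hp-estimates-0} operates. No genuinely new estimate is required; the trace bound on $z=0$ emerges as a free byproduct of the energy identity already used to prove the interior estimate.
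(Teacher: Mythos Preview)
Your approach is correct and matches the paper's (implicit) argument: the corollary is obtained by retaining, rather than discarding, the good-signed $z=z(\tau)$ boundary contribution inside the summed energy identities already used in the proof of Theorem \ref{theorem-Hp-estimates-0}, exactly as Corollary \ref{cor-radiation-L2} was extracted from the proof of Theorem \ref{maintheo}. One small imprecision worth fixing: the $q$-large absorption of the top-order terms in $f_\alpha$ does \emph{not} close for a single fixed $\alpha$ (since $f_\alpha$ contains derivatives $\partial^\beta v$ with $|\beta|=|\alpha|+1$ that are not all of the form $\partial_i\partial^\alpha v$), so the trace term on $z=z(\tau)$ must be carried through the full closed-system summation of Steps 2--4 rather than extracted from a single application of \eqref{keyf}.
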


In general, we have the following result.

\begin{corollary}\label{theorem-Hp-estimates}
For some integers $k\ge 2$, suppose $f\in H^{2k-2}(\Omega)$,
$\varphi\in H^{2k-1}(\Sigma_{0})$, and $\psi\in H^{2k-1}(\Sigma_{1})$. Let $v$ be a
$C^{2k}(\Omega)\cap C^{2k-1}(\Omega\cup\Sigma_0\cup\Sigma_1)$-solution of
\eqref{eq}.
Then,
\begin{align*}
\|v\|_{H^k(\Omega)}
\leq C\{\|f\|_{H^{2k-2}(\Omega)}+\|\varphi\|_{H^{2k-1}(\Sigma_{0})}+\|\psi\|_{H^{2k-1}(\Sigma_{1})}\},
\end{align*}
where
$C$ is a positive constant depending only on $k$, $T$, $\lambda$, $\Lambda$,
$|g^{ij}|_{C^{2k-1}(\Omega)}$, $ |a^{\alpha}|_{C^{2k-2}(\Omega)}$, and
$|\omega|_{C^{2k-2}(\Omega)}$.  Moreover, $v_i\in H^{k-i}([0,T]\times S^2)$ for $i=0, \cdots, k-1$, and
\begin{align}\label{eq-estimate-radiation-Hk}
\|v_i\|_{H^{k-i}([0,T]\times S^2)}
\leq C\{\|f\|_{H^{2k-2}(\Omega)}+\|\varphi\|_{H^{2k-1}(\Sigma_{0})}+\|\psi\|_{H^{2k-1}(\Sigma_{1})}\}.
\end{align}
\end{corollary}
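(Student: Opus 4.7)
The plan is to deduce this corollary directly from Theorem \ref{theorem-Hp-estimates-0} and Corollary \ref{cor-radiation-Hk-jl} by specializing the parameter $l$. Since the $H^k(\Omega)$-norm of $v$ controls and is controlled by $\sum_{|\alpha|\le k}\|\partial^\alpha v\|_{L^2(\Omega)}$, the main task is to bound every mixed derivative $\partial^\alpha v$ of order at most $k$, including those with maximal $\tau$-derivative count $\alpha_0=k$.

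First I would set $l=k$ in Theorem \ref{theorem-Hp-estimates-0}. The hypotheses $k\ge l\ge 2$ become $k\ge k\ge 2$, so this choice is admissible precisely when $k\ge 2$, which matches the assumption of this corollary. Under this choice the data regularity required is exactly $f\in H^{k+l-2}(\Omega)=H^{2k-2}(\Omega)$, $\varphi\in H^{2k-1}(\Sigma_0)$, $\psi\in H^{2k-1}(\Sigma_1)$, and the corresponding $C^{k+l}=C^{2k}$ and $C^{k+l-1}=C^{2k-1}$ regularity of $v$, in agreement with the hypotheses of the corollary. The conclusion of Theorem \ref{theorem-Hp-estimates-0} with $l=k$ gives
\[
\|\partial^\alpha v\|_{L^2(\Omega)}\le C\bigl\{\|f\|_{H^{2k-2}(\Omega)}+\|\varphi\|_{H^{2k-1}(\Sigma_0)}+\|\psi\|_{H^{2k-1}(\Sigma_1)}\bigr\}
\]
for every $|\alpha|\le k$ with $\alpha_0\le k$. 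Since the constraint $\alpha_0\le k$ is automatic from $|\alpha|\le k$, summing over all such $\alpha$ yields the claimed $H^k(\Omega)$-estimate.

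For the second assertion concerning the trace functions $v_i=\lim_{z\to 0}\partial_z^i v$, I would apply Corollary \ref{cor-radiation-Hk-jl} with the same choice $l=k$. That corollary states that for $i+j+m\le k$ and $j\le l$, the norm $\|\partial_\tau^j\partial_{S^2}^m v_i\|_{L^2([0,T]\times S^2)}$ is controlled by the same right-hand side. With $l=k$ the constraint $j\le l$ is likewise automatic from $i+j+m\le k$, so summing over all admissible $j$ and $m$ with $j+m\le k-i$ reconstructs the full $H^{k-i}([0,T]\times S^2)$-norm of $v_i$ and gives \eqref{eq-estimate-radiation-Hk}. The existence and regularity of the $v_i$ up to $z=0$ is already guaranteed by Corollary \ref{cor-radiation-Hk-j1}, combined with the higher $\tau$-regularity produced by Corollary \ref{cor-radiation-Hk-jl}.

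Since the two cited results do essentially all of the work, I do not expect any genuine obstacle; the only point worth checking carefully is the consistency of the parameter ranges. Theorem \ref{theorem-Hp-estimates-0} is proved by induction on the iteration formulas \eqref{eq-estimate-iteration-l} (for $l<p$) and \eqref{eq-estimate-iteration-l=p} (for the boundary case $l=p$), and setting $l=k$ uses both the inductive step up to the highest admissible $l$ and the termination of that induction with the estimate of pure $\tau$-derivatives. One should simply verify that the constants that arise depend only on $k$, $T$, $\lambda$, $\Lambda$, and the stated $C^{2k-1}$, $C^{2k-2}$ norms of the coefficients, which is immediate from the chain of constants in Theorems \ref{maintheo}, \ref{theorem-partial-estia}, and \ref{theorem-Hp-estimates-0}.
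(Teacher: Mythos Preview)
Your proposal is correct and is exactly the intended route: the paper states Corollary~\ref{theorem-Hp-estimates} without separate proof, as the direct specialization $l=k$ of Theorem~\ref{theorem-Hp-estimates-0} and Corollary~\ref{cor-radiation-Hk-jl}. Your observation that the constraints $\alpha_0\le k$ and $j\le k$ become vacuous once $|\alpha|\le k$ and $i+j+m\le k$ is precisely the point, and the matching of regularity indices and constants is as you describe.
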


We point out that there is a loss of regularity by {\it half} along the $\tau$-direction.
This is a well-known fact for characteristic initial-value problems.

\section{Existence of Solutions and Their Radiation Fields}\label{sec-Existence}

We now consider the existence of solutions of the timelike-null problem \eqref{eq}.
We will adapt a well-known process of proving the existence for the analytic case following the method in \cite{Duff1958},
and then obtaining the existence for the general case by
approximations. See also \cite{Hagen1977}.

We adopt the setting in the previous section and consider
\begin{align}\label{eq-main-eq}\begin{split}
Lv&=0 \quad \text{in } \Omega, \\
v|_{\Sigma_{0}}&=\varphi \quad \text{on } \Sigma_{0}, \\
v|_{\Sigma_{1}}&=\psi \quad \text{on } \Sigma_{1}.
\end{split}\end{align}

\begin{theorem}\label{theorem-Existence}
For some integer $m\ge 2$,  assume $\varphi\in H^{2m+5}(\Sigma_0)$ and
$\psi\in H^{2m+5}(\Sigma_{1})$, with $\varphi=\psi$ on $\Sigma_0\cap \Sigma_1$.
Then,
there exists a unique solution $v\in C^{m}(\Omega_{T}\cup \Sigma_0\cup \Sigma_1)$ of \eqref{eq-main-eq}.
Moreover,
$$\|v\|_{C^m(\Omega)}\le C\big\{\|\varphi\|_{H^{2m+5}(\Sigma_0)}
+\|\psi\|_{H^{2m+5}(\Sigma_1)}\big\},$$
where $C$ is a positive constant depending on $m$, $\lambda$, $\Lambda$, $T$, $z_0$, and  $g^{ij}$.
%Moreover, if $v_{0}\in C^{\infty}(\bar\Sigma_0)$ and
%$v_{1}\in C^{\infty}(\bar \Sigma_{1})$, then $v\in C^{\infty}(\Omega_{T}\cup \Sigma_0\cup \bar\Sigma_1)$.
\end{theorem}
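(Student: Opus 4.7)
The plan follows the two-step strategy indicated just before the theorem: construct solutions for analytic data via the Cauchy--Kowalewski-type method of Duff, then pass to the general case by approximation using the $H^{k}$-estimates from Section \ref{sec-Hk-Estimates}. The guiding numerology is that $\Omega$ is four-dimensional, so by the standard Sobolev embedding $H^{m+3}(\Omega)\hookrightarrow C^{m}(\overline{\Omega})$. Corollary \ref{theorem-Hp-estimates} with $k=m+3$ demands data in $H^{2k-1}=H^{2m+5}$, which exactly matches the hypothesis.

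First, I would regularize the data: pick sequences of real-analytic functions $\varphi_{n}$ on $\Sigma_{0}$ and $\psi_{n}$ on $\Sigma_{1}$ converging to $\varphi$ and $\psi$ in $H^{2m+5}$ while preserving the corner compatibility $\varphi_{n}=\psi_{n}$ on $\Sigma_{0}\cap\Sigma_{1}$. One way is to subtract off a smooth extension of the corner trace, mollify on each face separately, and add the extension back. If the coefficients of $L$ are not already analytic, approximate them likewise by analytic coefficients convergent in the relevant $C^{2k-1}$-norm. For each $n$, invoke Duff's characteristic initial-boundary-value existence theorem for linear analytic operators to produce an analytic solution $v_{n}$ of $Lv_{n}=0$ in $\Omega$ with $v_{n}|_{\Sigma_{0}}=\varphi_{n}$ and $v_{n}|_{\Sigma_{1}}=\psi_{n}$. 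Because Duff's construction yields a convergent power series at the corner, $v_{n}$ is in particular smooth up to the closure of $\Omega$, so it falls within the hypothesis class of the estimates proved in Section \ref{sec-Hk-Estimates}.

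Second, I would apply Corollary \ref{theorem-Hp-estimates} with $k=m+3$ to the difference $v_{n}-v_{n'}$, which satisfies $L(v_{n}-v_{n'})=0$ with data $\varphi_{n}-\varphi_{n'}$ on $\Sigma_{0}$ and $\psi_{n}-\psi_{n'}$ on $\Sigma_{1}$. This gives
\[
\|v_{n}-v_{n'}\|_{H^{m+3}(\Omega)}\le C\bigl\{\|\varphi_{n}-\varphi_{n'}\|_{H^{2m+5}(\Sigma_{0})}+\|\psi_{n}-\psi_{n'}\|_{H^{2m+5}(\Sigma_{1})}\bigr\},
\]
so that $\{v_{n}\}$ is Cauchy in $H^{m+3}(\Omega)$. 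The limit $v\in H^{m+3}(\Omega)$ solves $Lv=0$ distributionally and realizes the prescribed traces on $\Sigma_{0}$ and $\Sigma_{1}$ by trace continuity; Sobolev embedding yields $v\in C^{m}(\overline{\Omega})$ together with the stated norm bound. Uniqueness is immediate from the $H^{1}$-estimate of Theorem \ref{maintheo} applied to the difference of two putative solutions.

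The main obstacle I anticipate is the analytic existence step. Duff's argument produces a formal power series at the corner $\Sigma_{0}\cap\Sigma_{1}$, and while convergence on a small neighborhood of the corner is essentially Cauchy--Kowalewski, extending the analytic solution to all of $\Omega$ requires a careful propagation along $\Sigma_{0}$ and $\Sigma_{1}$ via a sequence of characteristic Cauchy problems, together with an analytic-continuation argument exploiting the globally analytic structure of the coefficients. A secondary difficulty is arranging the regularization of $(\varphi,\psi)$ to preserve the corner compatibility while converging in the $H^{2m+5}$-norm, since naive mollification alters the boundary trace; the subtract-mollify-add procedure mentioned above is the natural remedy but must be verified carefully in the anisotropic setting.
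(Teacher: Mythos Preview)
Your proposal is correct and follows essentially the same two-step approach as the paper: analytic existence via a Cauchy--Kowalewski/Duff majorant argument, followed by approximation and passage to the limit using Corollary~\ref{theorem-Hp-estimates} with $k=m+3$ together with the Sobolev embedding $H^{m+3}(\Omega)\hookrightarrow C^{m}$.

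One point deserves correction. You assert that the analytic solutions $v_{n}$ are ``smooth up to the closure of $\Omega$,'' but the paper is careful to note (see Remark~\ref{remark-Regularity}) that the majorant construction yields analyticity only for $z>0$: the operator degenerates at $z=0$ since $g^{11}=z^{2}V$ vanishes there, and it is \emph{not} a~priori clear that the analytic solution extends to $z=0$. The reason the argument still works is precisely that the hypotheses of Corollary~\ref{theorem-Hp-estimates} (and of Theorems~\ref{maintheo}--\ref{theorem-Hp-estimates-0}) require only $v\in C^{2k}(\Omega)\cap C^{2k-1}(\Omega\cup\Sigma_{0}\cup\Sigma_{1})$, not regularity up to $\{z=0\}$; this was built into the energy estimates by integrating over the truncated domains $\Omega_{\epsilon}$ and letting $\epsilon\to 0$. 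Thus the obstacle you anticipated---propagating the analytic solution across all of $\Omega$---is handled by the majorant argument for $z>0$, while the degeneracy at $z=0$ is handled by the design of the energy estimates, and the eventual $C^{m}$-regularity up to $z=0$ emerges only \emph{after} the Sobolev embedding is applied to the $H^{m+3}$-bound.
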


\begin{proof}
We write the equation in \eqref{eq-main-eq} in the form
\begin{equation}
2\partial_{z\tau}v+z^{2}V\partial_{zz}v+2g^{1A}\partial_{zA}v+g^{AB}\partial_{AB}v+\tilde{a}^{i}\partial_{i}v+bv=0.
\end{equation}

We first assume that $g^{ij}, \tilde{a}^{i}, b,$ and $\varphi,\psi$ are real analytic and hence can be expanded
as a power series in $\tau$. For example, we have
\begin{align*}
\psi=\sum_{i=0}^{\infty}\psi_{i}( \theta)\tau^{i}.
\end{align*}
Here and hereafter, we denote by $\theta=(x^2, x^3)$, coordinates on $S^2$. We define
\begin{align*}
&u^{0}=v=\sum_{i=0}^{\infty} u_{i}^{0}(z, \theta)\tau^{i},\\
&u^{1}=\partial_{z}v=\sum_{i=0}^{\infty} u_{i}^{1}(z, \theta)\tau^{i},\\
&u^{A}=\partial_{A}v=\sum_{i=0}^{\infty} u_{i}^{A}(z, \theta)\tau^{i}, \ A=2,3, \end{align*}
and
\begin{align*}
w=\partial_{\tau}v=\sum_{i=0}^{\infty} w_{i}(z, \theta)\tau^{i}.
\end{align*}
For $u^0, u^1, u^2, u^3$, and $ w$, we have
\begin{align*}
\partial_\tau u^{0}&=w,\\
2\partial_\tau u^{1}&=-z^{2}V\partial_z u^{1}-2g^{1A}\partial_z u^{A}
-g^{AB}\partial_B u^{A}+\bar{a}^{i}u^{i}+cw,\\
\partial_\tau u^{A}&=\partial_A w,\end{align*}
and
\begin{align*}
2\partial_z w=-z^{2}V\partial_z u^{1}-2g^{1A}\partial_z u^{A}
-g^{AB}\partial_B u^{A}+\bar{a}^{i}u^{i}+cw.
\end{align*}
Therefore,
\begin{align}\label{eq-ODE-system-2}\begin{split}
(i+1)u_{i+1}^{0}&=w_{i},\\
2(i+1)u_{i+1}^{1}&=\sum_{k\leq i}
L_{k}[\partial u_{k}^{0}, \partial u_{k}^{1}, \partial u_{k}^{2}, \partial u_{k}^{3}, w_{k}]
,\\
(i+1)u_{i+1}^{A}&=\partial_A w_{i},
\end{split}\end{align}
and
\begin{align}\label{eq-ODE-system-1}
2\partial_z w_{i}+d w_{i}
=\sum_{k\leq i}L_{k}[\partial u_{k}^{0}, \partial u_{k}^{1}, \partial u_{k}^{2}, \partial u_{k}^{3}]
+\sum_{k\leq i-1}L_{k}[w_{k}].\end{align}
Note
$$u_{0}^{0}=\varphi, \quad u_{0}^{1}=\partial_z \varphi, \quad u_{0}^{A}=\partial_A \varphi.$$
The equation \eqref{eq-ODE-system-1} is an ODE of $w_{i}$ with respect to $z$
and the initial value is given by
$$w_{i}(z_0, \cdot)=(i+1)\psi_{i+1}\quad\text{on }S^2.$$
For some $i\ge 0$, assume we already know $u_0^l$, $\cdots$, $u_i^l$, for $l=0, 1, 2, 3$,
and $w_0$, $\cdots$, $w_{i-1}$, then we can find $w_i$ by solving \eqref{eq-ODE-system-1}
and find $u_{i+1}^{l}$, for $l=0, 1, 2, 3$,
by  \eqref{eq-ODE-system-2}.

For simplicity, we assume
$$u_0^0=u_0^1=u_0^2=u_0^3=0,\quad w_0|_{z=z_0}=0.$$
Otherwise, we set
$$\tilde{u}^{0}=u^{0}-\varphi, \ \tilde{u}^{1}=u^{1}-\partial_z \varphi,\
\tilde{u}^{A}=u^{A}-\partial_A \varphi,\ \tilde{w}=w-\partial_\tau\psi.$$

For some $M>0$ and $\rho>0$, we define
$$F(s)=\frac{M}{1-\frac{s}{\rho}}.$$
We now consider a given point on $\Sigma_0$, say $(0,z_{*},0,0)$ with $z_*>0$.
Set
$$s=a^2\tau+a(z-z_{*})+x_2+x_3.$$
In a neighborhood of $(0,z_{*},0,0)$,
take $M>0, \rho>0$, and $a>1$, such that
the function
\begin{equation*}
F(s)
=\frac{M}{1-\rho^{-1}\big(a^2\tau+a(z-z_{*})+x_2+x_3\big)}
\end{equation*}
is a majorizing function of $g^{ij}$, $\bar{a}^{i}$, $c$, $f$ and $\varphi, \psi$. Then,
\begin{align}\label{eq-Majorizing-system}\begin{split}
\partial_\tau u^{i}&=F(s)\{\partial_z u^{1}
+\sum_{A,B=2,3}[\partial_z u^{A}+\partial_B u^{A}
+\partial_A w]+\sum_{j=0}^{3}u^{j}+w+1\},\\
\partial_z w&=F(s)\{\partial_z u^{1}
+\sum_{A,B=2,3}[\partial_z u^{A}+\partial_B u^{A}]+\sum\limits_{j=0}^{3}u^{j}+w+1\}
\end{split}\end{align}
forms a majorizing system. We now treat $s$ as an independent variable.
To construct a special solution $u^{l}=U(s), l=0,1,2,3$, and $w=W(s)$ of \eqref{eq-Majorizing-system},
we consider a system of linear ordinary differential equations given by
\begin{align}\label{eq-Majorizing-ODE}\begin{split}
\{a^2-F(s)(3a+4)\}\frac{d U}{d s}-2F(s)\frac{d W}{d s}&=F(s)(4U+W+1),\\
-F(s)(3a+4)\frac{d U}{d s}+a\frac{d W}{d s}&=F(s)(4U+W+1),
\end{split}\end{align}
with $U(0)=W(0)=0$. Take $\lambda$ small such that
\begin{equation*}
\begin{bmatrix}
a^2-F(s)(3a+4)&-2F(s)\\
-F(s)(3a+4)&a
\end{bmatrix}
\end{equation*}
is positive definite. Then, we can solve \eqref{eq-Majorizing-ODE}
and its solutions $U$ and $W$ are real analytic in the domain of $F(s)$.
Therefore, the domain where $u^{i}$ and $ w$ exist and are real analytic is the same as the domain
where all coefficients and initial values are real analytic. Similarly as in \cite{Duff1958},
by $a^2-F(s)(3a+4)>0$, $-2F(s)<0$, $-F(s)(3a+4)<0$, $a>0$,
the coefficients in the series of $U(s)$ and $W(s)$ are nonnegative, provided that $U(0)$ and $W(0)$ are $0$.
This proves the existence of an analytic solution $v$ of \eqref{eq-main-eq}.

We now consider the general case.
For every $m$,  we can find sequences of polynomials $P^{j}$ and $Q^{j}$ such that
\begin{equation*}
\lim\limits_{j\rightarrow\infty}D^{\alpha}P^{j}=D^{\alpha}\varphi\quad\text{uniformly on }\bar\Sigma_0,
\end{equation*}
for any $\alpha$ with $|\alpha|\leq 2m+5$ and $\alpha_0=0$, and
\begin{equation*}
\lim\limits_{j\rightarrow\infty}D^{\beta}Q^{j}=D^{\beta}\psi\quad\text{uniformly on }\Sigma_1,
\end{equation*}
for any $\beta$ with
$|\beta|\leq 2m+5$ with $\beta_1=0$. Denote by $v^{j}$ the solution of \eqref{eq-main-eq}
with the initial value and the boundary value given by $P^{j}$ and $Q^{j}$, respectively.
By the $H^{p}$-estimates provided by Corollary \ref{theorem-Hp-estimates} and the Sobolev embedding,
we find that the $v^{j}$ converges, as $j\rightarrow\infty$, to a solution
$v\in C^{m}(\Omega\cup\Sigma_0\cup \Sigma_1)$ of \eqref{eq-main-eq}
with the initial value and the boundary value given by $\varphi$ and $ \psi$, respectively.
\end{proof}

We now make an important remark concerning the regularity of solutions established in
Theorem \ref{theorem-Existence}.

\begin{remark}\label{remark-Regularity}
In Corollary \ref{theorem-Hp-estimates},  we  assume that solutions
are regular up to the boundary portion $\Sigma_0\cup \Sigma_1$.
We do not assume that solutions are regular up to $z=0$. This is because
estimates established in Corollary \ref{theorem-Hp-estimates} is applied to analytic solutions
as in the proof of Theorem \ref{theorem-Existence} and analytic solutions are proved to exist only
for $z>0$. It is not clear at first that the analytic solutions can be extended up to $z=0$.
With an approximation process as in the proof of Theorem \ref{theorem-Existence},
we can establish the existence of nonanalytic solutions. Since these solutions also satisfy
the estimates in Corollary \ref{theorem-Hp-estimates}, they are actually regular up to the entire
boundary of $\Omega$ and, in particular, the portion $z=0$, by the Sobolev embedding. \end{remark}

We now prove the following result.

\begin{corollary}\label{cor-Expansions}
For some integer $m\ge 2$,  assume $\varphi\in H^{2m+5}(\Sigma_0)$ and
$\psi\in H^{2m+5}(\Sigma_{1})$, with $\varphi=\psi$ on $\Sigma_0\cap \Sigma_1$.
Let $v$ be the solution as in Theorem \ref{theorem-Existence}. Then, for each $i=0, \cdots, m-1$,
there exists a function $v_i\in C^{m-i}([0, T]\times S^2)$, such that, for any
$(\tau, z, \theta)\in \Omega$,
$$\big|v(\tau, z, \theta)-\sum_{i=0}^{m-1}v_i(\tau, \theta)z^i\big|\le
Cz^m\big\{\|\varphi\|_{H^{2m+5}(\Sigma_0)}
+\|\psi\|_{H^{2m+5}(\Sigma_1)}\big\},$$
where $C$ is a positive constant depending on $m$, $\lambda$, $\Lambda$, $T$, $z_0$, and  $g^{ij}$.
\end{corollary}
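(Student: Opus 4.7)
The plan is a straightforward Taylor expansion in $z$ about $z=0$, once $C^m$-regularity of $v$ up to the entire boundary has been secured.

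First, I would invoke Corollary \ref{theorem-Hp-estimates} with $k = m+3$. The hypothesis $\varphi, \psi \in H^{2m+5}$ matches the requirement $H^{2k-1}$ of that corollary exactly, so
$$\|v\|_{H^{m+3}(\Omega)} \leq C\big\{\|\varphi\|_{H^{2m+5}(\Sigma_0)} + \|\psi\|_{H^{2m+5}(\Sigma_1)}\big\}.$$
Since $\Omega$ is a four-dimensional Lipschitz domain (two dimensions from $S^2$ together with $\tau$ and $z$), the Sobolev embedding $H^{m+3}(\Omega) \hookrightarrow C^m(\overline{\Omega})$ applies, giving
$$\|v\|_{C^m(\overline{\Omega})} \leq C\big\{\|\varphi\|_{H^{2m+5}(\Sigma_0)} + \|\psi\|_{H^{2m+5}(\Sigma_1)}\big\}.$$
The subtle point here is that Theorem \ref{theorem-Existence} only asserts $v \in C^m(\Omega \cup \Sigma_0 \cup \Sigma_1)$; we must invoke Remark \ref{remark-Regularity} to extend regularity to the boundary portion $z=0$. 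By the approximation procedure in the proof of Theorem \ref{theorem-Existence} combined with the embedding just stated, this extension is valid and quantitative.

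Second, I define the Taylor coefficients $v_i(\tau, \theta) := \tfrac{1}{i!}\, \partial_z^i v(\tau, 0, \theta)$ for $i = 0, \dots, m-1$. Since $v \in C^m(\overline{\Omega})$, each $\partial_z^i v$ belongs to $C^{m-i}(\overline{\Omega})$, and its trace at $z=0$ lies in $C^{m-i}([0,T] \times S^2)$, as required by the conclusion. Note that $v_0$ so defined coincides with the radiation field identified in Corollary \ref{cor-radiation-L2}, and up to factorials with the functions $v_i$ constructed in Corollary \ref{cor-radiation-Hk-j1}.

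Finally, for each fixed $(\tau, \theta) \in [0,T] \times S^2$, I would apply the one-variable Taylor theorem in $z$ with integral remainder:
$$v(\tau, z, \theta) - \sum_{i=0}^{m-1} v_i(\tau, \theta)\, z^i = \frac{z^m}{(m-1)!} \int_0^1 (1-s)^{m-1}\, \partial_z^m v(\tau, sz, \theta)\, ds.$$
The remainder is bounded in absolute value by $\tfrac{z^m}{m!}\, \|\partial_z^m v\|_{L^\infty(\Omega)} \leq C z^m \|v\|_{C^m(\overline{\Omega})}$, and combining with the $C^m$-bound above yields the desired inequality. The main conceptual hurdle is the first step, namely securing $C^m$-regularity of the solution all the way down to $z=0$, which rests on the weighted $H^k$-estimates of Section \ref{sec-Hk-Estimates} together with the density argument of Theorem \ref{theorem-Existence}; once this is in hand, both the existence of the coefficients $v_i$ with the stated regularity and the remainder bound are immediate from classical Taylor approximation.
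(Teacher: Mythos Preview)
Your proposal is correct and follows essentially the same approach as the paper's proof: establish $v\in C^m(\overline{\Omega})$ (the paper cites Theorem \ref{theorem-Existence} and Remark \ref{remark-Regularity} directly, while you unpack the underlying Sobolev embedding from Corollary \ref{theorem-Hp-estimates}), define $v_i=\tfrac{1}{i!}\partial_z^i v|_{z=0}$, and bound the Taylor remainder by $z^m\|\partial_z^m v\|_{L^\infty(\Omega)}$. The only cosmetic difference is that the paper absorbs your first step into the statement of Theorem \ref{theorem-Existence}, so its proof of the corollary is shorter.
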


\begin{proof} By Theorem \ref{theorem-Existence} and Remark \ref{remark-Regularity}, we have
$v\in C^m(\bar\Omega)$. Set
$$v_i=\frac{1}{i!}\partial_z^iv\big|_{z=0}.$$
We note that $v_i$ defined here differs from $v_i$ in Corollary \ref{cor-radiation-Hk-j1} by a constant multiple.
Then, for any
$(\tau, z, \theta)\in [0,T]\times (0,z_0]\times S^2$,
$$\big|v(\tau, z, \theta)-\sum_{i=0}^{m-1}v_i(\tau, \theta)z^i\big|\le
Cz^m\|\partial_z^mv\|_{L^\infty(\Omega)}.$$
We hence have the desired result.
\end{proof}

%According to  Friedlander \cite{Friedlander1967}, $v_0$ is the radiation field.

Next, we write the equation $Lu=0$ as
\begin{equation*}
2\partial_{z\tau}v+z^{2}V\partial_{zz}v+2g^{1A}\partial_{zA}v+g^{AB}\partial_{AB}v+a^{i}\partial_{i}v+bv=0,
\end{equation*}
where $i=1, 2, 3$.
Formally, we assume that $v$ has an asymptotic expansion given by
$$v(\tau, z, \theta)=\sum\limits_{k=0}^{\infty}v_{k}(\tau, \theta)z^{k}.$$
By $V\to1$ and $U^A\to0$ as $z\to 0$,
we have $a^{1}\to0$ as $z\to0$.
Suppose
\begin{align*}
V&=1+\sum_{k=1}^{\infty}V_{k}(\tau, \theta)z^{k},\\
g^{iA}&=\sum_{k=1}^{\infty}g_{k}^{iA}(\tau, \theta)z^{k}\quad\text{for }i=1, 2, 3\text{ and }A=2,3,\\
a^{i}&=\sum_{k=0}^{\infty}a_{k}^{i}(\tau, \theta)z^{k}\quad\text{for }i=1, 2, 3,\text{ and } a_{0}^{1}=0,\\
b&=\sum_{k=0}^{\infty}b_{k}(\tau, \theta)z^{k}.
\end{align*}
Then,
\begin{align*}
2\frac{\partial v_{1}}{\partial\tau}+g_{0}^{AB}\partial_{AB}v_{0}
+a_{0}^{A}\partial_A v_{0}+b_{0}v_{0}&=0,\\
4\frac{\partial v_{2}}{\partial\tau}+2g_{1}^{1A}\frac{\partial v_{1}}{\partial A}+a_{1}^{1}v_{1}
+\sum\limits_{i+j=1}[g_{i}^{AB}\partial_{AB}v_{j}
+a_{i}^{A}\partial_A v_{j}+b_{i}v_{j}]&=0,
\end{align*}
and, for $k\geq2$,
\begin{align*}
2(k+1)\frac{\partial v_{k+1}}{\partial\tau}+\sum\limits_{i+j=k,i\geq2}i(i-1)v_{i}V_{j}
+\sum\limits_{i+j-1=k; i,j\geq1}2ig_{j}^{1A}\partial_A v_{i}& \\
+\sum\limits_{i+j-1=k;i,j\geq1}a_{i}^{1}v_{j}+\sum\limits_{i+j=k}[g_{i}^{AB}\partial_{AB}v_{j}
+a_{i}^{A}\partial_A v_{j}+b_{i}v_{j}]&=0.
\end{align*}
Note $v_k=(\partial_z^kv/k!)|_{z=0}$. Hence, $v_k|_{\tau=0}=\lim_{z\to 0}(\partial_z^k\varphi/k!)$.
Therefore, with $v_{0}$ on $[0,T]\times S^2$ and the initial values $v_k$ on $\{0\}\times S^2$,
we can obtain $v_{k}$ on $[0,T]\times S^2$ successively, for $k\geq 1$.

To end this paper, we simply note that Theorem \ref{thrm-main}
follows easily from Theorem \ref{theorem-Existence} and Corollary \ref{cor-Expansions},
with the help of the change of coordinates in Section \ref{sec-Metrics}.

\end{document}